\newcommand{\Gmax}{G_\mathrm{max}}
\newcommand{\rhonat}{\rho_{\mathrm{Nat}}}
\newcommand{\can}{\text{can}}
\newcommand{\AHilb}{\mathop{A\text{-}\mathrm{Hilb}}\nolimits}
\newcommand{\GHilb}{\mathop{G\text{-}\mathrm{Hilb}}\nolimits}
\newcommand{\GoHilb}{\mathop{G_0\text{-}\mathrm{Hilb}}\nolimits}
\newcommand{\Nat}{{\mathrm{Nat}}}
\newcommand{\rep}{\operatorname{rep}}
\title{The special McKay correspondence\\
and exceptional collections}
\author{Akira Ishii and Kazushi Ueda}
\date{}
\begin{document}

\maketitle

\begin{abstract}
We show that the derived category of coherent sheaves
on the quotient stack of the affine plane
by a finite small subgroup of the general linear group
is obtained from the derived category of coherent sheaves
on the minimal resolution by adding a semiorthogonal summand
with a full exceptional collection.
The proof is based on an explicit construction
in the abelian case,
together with the analysis of the behavior
of the derived categories of coherent sheaves
under root constructions.
\end{abstract}

\section{Introduction}

Let $G$ be a finite small subgroup of $\GL_2(\bC)$
acting  on the affine plane $\bA^2 = \Spec R$.
The quotient singularity $X = \bA^2/G = \Spec R^G$
has two kinds of natural resolutions:
One is the minimal resolution $\tau : Y \to X$,
which exists uniquely
by the minimal model theory in dimension two.
The other is the non-commutative ring
$A = \End_{R^G} R$,
which is a {\em non-commutative crepant resolution}
in the sense of Van den Bergh
\cite[Definition 4.1]{Van_den_Bergh_NCR}.

The minimal resolution $Y$ is crepant
if and only if $G$ is a subgroup of $\SL_2(\bC)$,
whereas the non-commutative resolution $A$ is always crepant.
The ring $A$ is Morita equivalent to the crossed-product algebra
$R \rtimes G$,
so that the category of finitely-generated $A$-modules
is equivalent to the category of $G$-equivariant coherent sheaves
on $\bA^2$,
which in turn is equivalent to the category of coherent sheaves
on the quotient stack $[\bA^2 / G]$;
\begin{align*}
 \module A
  \cong \module R \rtimes G
  \cong \coh [\bA^2/G].
\end{align*}

When $G$ is a subgroup of $\SL_2(\bC)$,
Ito and Nakamura \cite{Ito-Nakamura_HSSS}
constructed
the commutative crepant resolution $Y$
as the $G$-Hilbert scheme
\cite{Nakamura_HSAGO}
parametrizing
$G$-invariant subschemes
$Z \subset \bA^2$
such that $H^0(\scO_Z)$ is isomorphic
to the regular representation of $G$
as a $G$-module.
This fine moduli interpretation
comes with the universal flat family
\begin{align} \label{eq:univ_family}
\begin{CD}
 \scZ @>{q}>> \bA^2 \\
 @V{p}VV @VV{\pi}V \\
 Y @>{\tau}>> X,
\end{CD}
\end{align}
which allows one to define
the integral functor
\begin{align} \label{eq:Phi}
 \Phi = q_* \circ p^* :
   D^b \coh Y \to D^b \coh [\bA^2 / G]
\end{align}
realizing the McKay correspondence
as an equivalence of derived categories
\cite{Kapranov-Vasserot, Bridgeland-King-Reid}.
This provides an example of
a generalization \cite[Conjecture 4.6]{Van_den_Bergh_NCR}
of a conjecture of Bondal and Orlov
\cite{Bondal-Orlov_semiorthogonal}
that any crepant resolutions of $X$,
either commutative or non-commutative,
are derived equivalent.

Even if $G$ is not a subgroup of $\SL_2(\bC)$,
the Hilbert-Chow morphism $\tau$
in the diagram \eqref{eq:univ_family}
is still a resolution of $X$,
which is minimal but not crepant
\cite{Ishii_MKG}.
The integral functor $\Phi$ is not an equivalence
but a full and faithful embedding,
and its essential image is admissible
\cite[Definition 2.1]{Bondal-Orlov_semiorthogonal}
since $\Phi$ has both left and right adjoints.

The essential image of $\Phi$
and its right orthogonal
are described as follows:

\begin{proposition} \label{pr:non-special}
Let $G$ be a finite subgroup of $\GL_2(\bC)$ and
$Y$ be the Hilbert scheme of $G$-orbits in $\bA^2$.
Then the essential image of $\Phi$ is generated by
$\{ \scO_{\bA^2} \otimes \rho \}_{\rho : \text{special}}$,
and its right orthogonal is generated by
$\{ \scO_0 \otimes \rho \}_{\rho : \text{non-special}}$.
\end{proposition}

{\em Special representations} are introduced
by Wunram \cite{Wunram}
to extend the McKay correspondence
to subgroups of $\GL_2(\bC)$.
We recall the basic definitions and properties
of special representations
in Section \ref{sc:special-McKay},
where the proof of Proposition \ref{pr:non-special}
is also given.

In the case of cyclic groups, we can prove
the existence of a full exceptional collection
in the semiorthogonal complement
of the essential image of $\Phi$:

\begin{theorem} \label{th:cyclic1}
Let $G$ be a finite small cyclic subgroup of $\GL_2(\bC)$ and
$Y$ be the Hilbert scheme of $G$-orbits in $\bA^2$.
Then there is an exceptional collection
$
 (E_1, \dots, E_n)
$
in $D^b \coh [\bA^2 / G]$ and a semiorthogonal decomposition
$$
 D^b \coh [\bA^2 / G]
  = \langle E_1, \dots, E_n, \Phi(D^b \coh Y) \rangle,
$$
where $n$ is the number of
irreducible non-special representations of $G$.
\end{theorem}

Theorem \ref{th:cyclic1} is not obvious at all, since
\begin{itemize}
 \item
the set $\{\scO_0 \otimes \rho \}_{\rho : \text{non-special}}$
rarely form an exceptional collection
(cf. Example \ref{eg:83}), and
 \item
the category $D^b \coh [\bA^2/G]$ does not have
an exceptional object at all
when $G$ is a subgroup of $\SL_2(\bC)$.
\end{itemize}

We use the abelian case to obtain a similar result
in a general case by using a slightly different functor,
while we expect the same result for the functor $\Phi$.

\begin{theorem}\label{th:main}
Let $G$ be a finite small subgroup of $\GL_2(\bC)$ and
$Y \to \bA^2/G$ be the minimal resolution of $\bA^2/G$.
For a suitable fully faithful functor
$$
 \Phi' : D^b \coh Y \to D^b \coh [\bA^2/G],
$$
there is an exceptional collection
$
 (E_1, \dots, E_n)
$
in $D^b \coh [\bA^2 / G]$ and a semiorthogonal decomposition
$$
 D^b \coh [\bA^2 / G]
  = \langle E_1, \dots, E_n, \Phi'(D^b \coh Y) \rangle,
$$
where $n$ is the number of
irreducible non-special representations of $G$.
\end{theorem}

Theorem \ref{th:main} is complementary to the works of
Craw \cite{Craw_SMC} and Wemyss \cite{Wemyss_GL2},
which describe $D^b \coh Y$
as the derived category of modules
over the path algebra of a quiver with relations
called the {\em special McKay quiver}.
One can say that
their works give a non-commutative description
of the commutative non-crepant resolution,
whereas Theorem \ref{th:main} gives the relation
between the commutative non-crepant resolution
and the non-commutative crepant resolution.


We now give the definition of the functor $\Phi'$.
The action of $G$ on $\bA^2$ induces
\begin{itemize}
 \item
an action of $G_0:=G \cap \SL(2, \bC)$ on $\bA^2$, and
 \item
an action of $G/G_0$ on $G_0\text{-Hilb}(\bA^2)$.
\end{itemize}
The Hilbert-Chow morphism
$Y_2 := G/G_0\text{-Hilb}(G_0\text{-Hilb}(\bA^2)) \to \bA^2/G$
from the iterated Hilbert scheme
is a resolution of $\bA^2/G$.
The resolution $Y_2 \to \bA^2/G$ is not necessarily minimal,
and factors through the minimal resolution
$Y \to \bA^2/G$;
\vspace{3mm}
$$
\begin{psmatrix}
 Y_2 & & Y \\
  & \bA^2/G.
\end{psmatrix}
\psset{arrows=->,shortput=nab,nodesep=5pt}
\ncline{1,1}{1,3}^{\varphi}
\ncline{1,1}{2,2}
\ncline{1,3}{2,2}
$$
By embedding $G$ into $\SL_3(\bC)$ and
embedding $Y_2$ as a divisor in $G/G_0\text{-Hilb}(G_0\text{-Hilb}(\bA^3))$,
one can deduce from \cite[Theorem 2.7]{MR3049308}
that $Y_2$ can be identified with the moduli space $\scM_\theta$
of stable $G$-equivariant sheaves on $\bA^2$
for a suitable choice of a stability parameter $\theta$.
This gives a fully faithful functor
$$
 \Phi_2'(-) := {\pi_2}_* \lb {\pi_1}^*(-) \otimes \scE_\theta \rb :
  D^b \coh Y_2 \to D^b \coh [\bA^2 / G],
$$
where $\scE_\theta$ is the universal family on $\scM_\theta \times [\bA^2/G]$.
The composition
$$
 \Phi' := \Phi_2' \circ \varphi^* : D^b \coh Y \to D^b \coh [\bA^2/G]
$$
is a fully faithful functor.

The proof of Theorem \ref{th:main} proceeds
as follows:
\begin{enumerate}
 \item \label{it:cyclic}
If $G \subset \GL_2(\bC)$ is a cyclic group,
then
special representations can be computed by continued fraction expansions
\cite{Wunram2, Wunram},
and we can explicitly construct an exceptional collection $E_1, \dots, E_n$
in $\coh[\bA^2/G]$ as in Theorem \ref{th:cyclic}.
\footnote{
Kawamata pointed out that
this step can also be carried out
using his arguments
\cite{Kawamata_LCBMDC, Kawamata_DCTV},
and subsequently written a paper \cite{Kawamata_DCTVII}
which includes it as a special case.}
 \item \label{it:equivariant}
Let $G$ be a finite small subgroup of $\GL_2(\bC)$
and put $G_0 = G \cap \SL_2(\bC)$.
Then $G_0$ is a normal subgroup of $G$
and $A = G / G_0$ is a cyclic group.
The group $A$ acts on $Y_0 = \GoHilb \bA^2$ and
one has an equivalence
\begin{align} \label{eq:SOD0}
 \Phi_0 : D^b \coh [Y_0 / A] \simto D^b \coh [\bA^2 / G] 
\end{align}
by Theorem \ref{th:equivariant},
which is an equivariant version of the McKay correspondence
\cite{Kapranov-Vasserot, Bridgeland-King-Reid}.
Since $Y_0$ is a resolution of $\bA^2 / G_0$,
a resolution of $Y_0 / A$ is a resolution of $\bA^2 / G$.
\item \label{it:root}
The stack $[Y_0/A]$ may have non-trivial
stabilizer groups along divisors,
whereas the {\it canonical stack} $\scY_1$
associated with the coarse moduli space $Y_1 := Y_0/A$
is a stack which has trivial stabilizer groups
except at the singular points.
There is a morphism $[Y_0/A] \to \scY_1$
coming from the universal property
of the canonical stack,
which can be regarded as an iteration of {\it root constructions}
\cite{Abramovich-Graber-Vistoli, Cadman_US}
along simple normal crossing divisors.
The coarse moduli spaces
of irreducible divisors with non-trivial stabilizer groups
are smooth rational curves,
so that one has a full and faithful functor
$
 \Phi_1 : D^b \coh \scY_1 \to D^b \coh [Y_0/A]
$
and a semiorthogonal decomposition
\begin{align} \label{eq:SOD1}
 D^b \coh [Y_0 / A]
  = \langle E_1, \dots, E_{n_1}, \Phi_1(D^b \coh \scY_1) \rangle
\end{align}
by Proposition \ref{pr:root}.
 \item \label{it:cyclic2}
The coarse moduli space $Y_1$ of $\scY_1$ has
cyclic quotient singularities.
By taking the minimal resolution of it,
we obtain a resolution $Y_2$ of $\bA^2 / G$.
This gives a full and faithful functor
$
 \Phi_2 : D^b \coh \scY_1 \to D^b \coh Y_2
$
and a semiorthogonal decomposition
\begin{align} \label{eq:SOD2}
 D^b \coh \scY_1
  = \langle E_{n_1+1}, \dots, E_{n_2},
   \Phi_2(D^b \coh Y_2) \rangle
\end{align}
by Proposition \ref{pr:orbifold}.
 \item \label{it:minimal}
The minimal resolution $Y$ can be obtained from $Y_2$
by contracting $(-1)$-curves.
This gives a full and faithful functor
$
 \Phi_3 : D^b \coh Y \to D^b \coh Y_2
$
and a semiorthogonal decomposition
\begin{align} \label{eq:SOD3}
 D^b \coh Y_2
  = \langle E_{n_2+1}, \dots, E_{n},
   \Phi_3(D^b \coh Y) \rangle
\end{align}
by Orlov \cite[Theorem 4.3]{Orlov_PB}.
\end{enumerate}

By combining 
the semiorthogonal decompositions
from \eqref{eq:SOD0} to \eqref{eq:SOD3},
one obtains Thoerem \ref{th:main}.
In fact, our proof of Theorem \ref{th:main}
readily gives the following global analog:

\begin{theorem} \label{th:global}
Let $\scX$ be the canonical stack
associated with a surface $X$
with at worst quotient singularities,
and $Y$ be the minimal resolution of $X$.
Then there is a full and faithful functor
$$
 \Phi : D^b \coh Y \to D^b \coh \scX
$$
and a semiorthogonal decomposition
\begin{equation*}
 D^b \coh \scX =
  \langle
   E_1, \dots, E_{\ell},  \Phi (D^b \coh Y)
  \rangle
\end{equation*}
where $E_1, \dots, E_{\ell}$ is an exceptional collection.
\end{theorem}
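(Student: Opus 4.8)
The plan is to reproduce the proof of Proposition~\ref{prop:orbifold} essentially word for word, replacing the abelian input Theorem~\ref{th:cyclic} by Theorem~\ref{th:main} wherever it was used; the only new feature is that $X$ is now allowed to carry non-abelian quotient singularities, so the relevant local model is $\bC^2/G$ for an arbitrary finite small $G \subset \GL_2(\bC)$ rather than for an abelian one.

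First I would dispose of the local case $X = \bC^2/G$. Here the canonical stack $\scX$ is the quotient stack $[\bC^2/G]$, the minimal resolution $Y$ is $\GHilb(\bC^2)$ by \cite{Ishii_MKG}, and the proof of \cite[Theorem~3.1]{Ishii_MKG} identifies the reduced fiber product $\scZ$ with the quotient by $G$ of the universal subscheme in $Y \times \bC^2$ under the identification of $Y$ with $\GHilb(\bC^2)$. Under this identification $\Phi = q_* \circ p^*$ is exactly the functor of Theorem~\ref{th:main}, and that theorem provides both the full faithfulness of $\Phi$ and the semiorthogonal decomposition $D^b \coh \scX = \langle E_1, \dots, E_n, \Phi(D^b \coh Y) \rangle$ with $(E_1, \dots, E_n)$ an exceptional collection.

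Next I would globalize full faithfulness exactly as in Proposition~\ref{prop:orbifold}: the composition $\Psi \circ \Phi$ is an integral functor with some kernel $\scP$ on $Y \times Y$; by the local case $\scP$ is etale locally the structure sheaf of the diagonal, hence a line bundle on the diagonal, so $\Phi$ is fully faithful. For the decomposition, let $\scC = \Phi(D^b \coh Y)^{\perp}$ be the right orthogonal complement, which is admissible since $\Phi$ has both adjoints. Because $\pi$ is an isomorphism over the smooth locus of $X$ and $\Phi$ is an equivalence there, every object of $\scC$ is supported on the finitely many stacky points of $\scX$ lying over $\mathrm{Sing}\, X$. Decomposing $\scC$ by this finite support splits it as an orthogonal direct sum of local pieces, one per singular point of $X$; by the local case each such piece carries a full exceptional collection coming from Theorem~\ref{th:main}, and objects supported at distinct points are mutually orthogonal, so concatenating these collections in any order yields a full exceptional collection $(E_1, \dots, E_{\ell})$ of $\scC$, which is the assertion.

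The step I expect to be the main obstacle is the local-to-global bookkeeping in the last paragraph: one must argue carefully that $\scC$ really is supported on the stacky locus over $\mathrm{Sing}\, X$ and that it decomposes as the orthogonal sum of the contributions of the individual singular points, so that the exceptionality and the generation established locally by Theorem~\ref{th:main} genuinely assemble into a full exceptional collection of $\scC$. This is, however, precisely the argument already used in the cyclic setting in the proof of Proposition~\ref{prop:orbifold}, and it goes through unchanged once the non-abelian local model is supplied by Theorem~\ref{th:main}.
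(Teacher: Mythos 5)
Your proposal is correct and follows the paper's own route: the paper proves Theorem \ref{th:global} precisely by rerunning the proof of Proposition \ref{prop:orbifold} with Theorem \ref{th:main} substituted for Theorem \ref{th:cyclic}, which is exactly what you do (local case via \cite[Theorem 3.1]{Ishii_MKG} and Theorem \ref{th:main}, full faithfulness via the kernel of $\Psi \circ \Phi$ being a line bundle on the diagonal, and the decomposition assembled from local contributions at the isolated singular points). Your final paragraph merely spells out the support argument that the paper leaves implicit.
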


This gives the relation
between the derived categories of the commutative minimal resolution
and a non-commutative crepant resolution
for any surface $X$ with at worst quotient singularities.
As an application of Theorem \ref{th:global},
we show the existence
of a full exceptional collection
on a two-dimensional stack
associated with an invertible polynomial
in Theorem \ref{th:invertible}.

Root constructions
appearing in Step \ref{it:root} 
are introduced independently
by Cadman \cite{Cadman_US} and
Abramovich, Graber and Vistoli \cite{Abramovich-Graber-Vistoli},
and play important roles
in the theory of toric stacks
\cite{Borisov-Chen-Smith, Fantechi-Mann-Nironi}
and orbifold Gromov-Witten theory
\cite{Abramovich-Graber-Vistoli}.
As the analysis of the derived categories of root stacks
in Step \ref{it:root} may also be of independent interest,
we state it as theorems here.
The first result concerns the root stack of a line bundle:

\begin{theorem} \label{th:root1}
Let $\scL$ be a line bundle
on a Deligne-Mumford stack $\scX$
and $\sqrt[r]{\scL/\scX}$ be the $r$-th root stack
for a positive integer $r$.
Then the abelian category of coherent sheaves
on $\sqrt[r]{\scL/\scX}$
is the direct sum of $r$ copies
of the abelian category of coherent sheaves on $\scX$;
\begin{align*}
 \coh \sqrt[r]{\scL/\scX} \cong \lb \coh \scX \rb^{\oplus r}.
\end{align*}
\end{theorem}

Note that the decomposition above is
not only semiorthogonal
but orthogonal,
and we do not need to pass to the derived categories.
Theorem \ref{th:root1} enables us
to generalize the results of Borisov and Hua
\cite{Borisov-Hua}
to the case when the $N$-lattice has torsion
(cf. the second paragraph in \cite[Section 2]{Borisov-Hua}).

The second result deals with the root stack of a line bunlde with a section:

\begin{theorem} \label{th:root2}
Let $\scD$ be a smooth divisor
in a smooth Deligne-Mumford stack $\scX$
and $\scY = \sqrt[r]{(\scO_\scX(\scD), 1) / \scX}$
be the $r$-th root stack of the line bundle $\scO_\scX(\scD)$
with the canonical section $1 \in H^0(\scO_\scX(\scD))$.
Then there are full and faithful functors
\begin{align*}
\Phi_\scX &: D^b \coh \scX \to D^b \coh \scY, \\
\Phi_\scD &: D^b \coh \scD \to D^b \coh \scY
\end{align*}
and a semiorthogonal decomposition
\begin{align*}
 D^b \coh \scY = \la
  \Phi_\scD(D^b \coh \scD) \otimes \scM^{\otimes (r-1)},
   \ldots
  \Phi_\scD(D^b \coh \scD) \otimes \scM,
  \Phi_\scX(D^b \coh \scX) \ra,
\end{align*}
where $\scM$ is the universal line bundle on $\scY$.
\end{theorem}

We assume that all divisors are Cartier
throughout this paper.
Theorem \ref{th:root2} is a root stack analog of
\cite[Theorem 4.3]{Orlov_PB},
where the derived category of the blow-up
is described in terms of derived categories
of the original variety and the center.
This shows that the root construction
behaves very much like the `blow-up along a divisor'
as long as derived categories of coherent sheaves are concerned.

This paper is organized as follows:
We recall the definition of special representations
and prove Proposition \ref{pr:non-special}
in \pref{sc:special-McKay}.
Steps \ref{it:cyclic} and \ref{it:equivariant} are carried out
in Sections \ref{sc:cyclic} and \ref{sc:equivariant} respectively.
\pref{th:root1} is proved in \pref{sc:root1}, and
\pref{th:root2} is proved in \pref{sc:root2}.
Steps \ref{it:root} and \ref{it:cyclic2} are carried out
in Sections \ref{sc:iteration} and \ref{sc:canonical} respectively.
Theorems \ref{th:main} and \ref{th:global}
are proved in \pref{sc:proof}.
As a corollary,
we show in Section \ref{sc:invertible}
that the two-dimensional Deligne-Mumford stack
associated with an invertible polynomial in four variables
has a full exceptional collection.

{\bf Acknowledgment}:
We thank Yujiro Kawamata
for the remark on Step 1 above.
A.~I. is supported by Grant-in-Aid for Scientific Research (No.18540034).
K.~U. is supported by Grant-in-Aid for Young Scientists
(No.20740037 and No.24740043).
\section{The special McKay correspondence}
 \label{sc:special-McKay}

In this section,
we recall the definition of special representations
and prove Proposition \ref{pr:non-special}.
Let $G$ be a finite small subgroup of $\GL_2(\bC)$
acting  on the affine plane $\bA^2 = \Spec R$
and $\pi : Y \to X = \Spec R^G$ be the minimal resolution
of the quotient singularity.
First we recall the relation
between full sheaves on $Y$
and reflexive modules on $X$:

\begin{definition-lemma}[Esnault \cite{Esnault_RMQSS}]
Let $\scM$ be a sheaf on $Y$
and $\scM^{\vee}$ be its dual sheaf.
Then there exists a reflexive module $M$ on $X$ such that
$
 \scM \cong \Mtilde := \pi^*M / \text{\it torsion}
$
if and only if the following three conditions are satisfied:
\begin{enumerate}
 \item
$\scM$ is locally-free.
 \item
$\scM$ is generated by global sections.
\item
$H^1((\scM)^{\vee} \otimes \omega_{Y})=0$.
\end{enumerate}
In this case $\scM$ is said to be {\em full}.
\end{definition-lemma}

Note that reflexive modules coincide
with Cohen-Macaulay modules
since $X$ is a normal surface.

\begin{theorem}[{Auslander \cite{Auslander_RSASS}}]
The functor $(-)^G$ of taking $G$-invariant part
gives an equivalence
from the category of projective $R \rtimes G$-modules
to the category of Cohen-Macaulay $R^G$-modules.
\end{theorem}

It follows that
indecomposable full sheaves on $Y$ are
in one-to-one correspondence
with irreducible representations of $G$.

\begin{theorem}[{Wunram \cite[Main Result]{Wunram}}]
 \label{th:Wunram-1}
Let
$
 E = \bigcup_{i=1}^r E_i
$
be the decomposition into irreducible components
of the exceptional set $E$.
Then for every curve $E_i$
there exists exactly one indecomposable reflexive module
$M_i$
such that the corresponding full sheaf
$\Mtilde_i = \pi^* M_i / \text{\it torsion}$
satisfies the conditions $H^1((\Mtilde)^{\vee})=0$
and
$$
 c_1(\Mtilde_i) \cdot E_j = \delta_{ij}.
$$
\end{theorem}

A full sheaf is said to be {\em special}
if there is an index $1 \le i \le r$ such that $\scM = \scM_i$
or it is isomorphic to the structure sheaf $\scO_Y$.
The special full sheaf $\scO_Y$ corresponds to the trivial representation
and is denoted by $\scM_0$.
Special full sheaves are characterized as follows:

\begin{theorem}[{Wunram \cite[Theorem 1.2]{Wunram}}]
 \label{th:Wunram-2}
An indecomposable full sheaf $\scM$ is special
if and only if $H^1(\scM^{\vee})=0$.
\end{theorem}

An irreducible representation $\rho$ of $G$ is said to be special
if the corresponding full sheaf
$
 \scM_\rho
  = \pi^*
     \left(
      (\rho^\vee \otimes R)^G
     \right) / \text{torsion}
$
is special.
%

Special full sheaves generate the derived category
of coherent sheaves on $Y$:

\begin{theorem}[{Van den Bergh \cite[Theorem B]{Van_den_Bergh_TFNR}}]
 \label{th:VdB}
The direct sum
of indecomposable special full sheaves
generates $D^b \coh Y$.
\end{theorem}

\begin{proof}[Proof of Proposition \ref{pr:non-special}]
It follows from Theorem \ref{th:VdB} that
the essential image of $\Phi$ is generated by
$
 \{ \scO_{\bA^2} \otimes \rho \}_{\rho : \text{special}}.
$

$
 \{ \scO_0 \otimes \rho \}_{\rho : \text{non-special}}
$
is right orthogonal to
$
 \{ \scO_{\bA^2} \otimes \rho \}_{\rho : \text{special}}
$
since
$$
 \RHom_{[\bA^2/G]}(\scO_{\bA^2} \otimes \rho,
  \scO_0 \otimes \tau)
  \cong
\begin{cases}
 \bC & \rho = \tau, \\
 0 & \text{otherwise}. 
\end{cases}
$$
Together, they generate $D^b \coh [\bA^2/G]$.
\end{proof}

Let $\rhonat$ be the two-dimensional representation of $G$
coming from the inclusion $G \subset \GL_2(\bC)$,
and $a_{\mu \nu}$ be the multiplicity
appearing in the irreducible decomposition
$$
 \mu \otimes \rhonat
  = \bigoplus_{\nu \in \Irrep(G)} \nu^{\oplus a_{\mu \nu}}
$$
of tensor products of in the representation ring of $G$.
It follows from the projective resolution
$$
\begin{CD}
 0 @>>>
  \scO_{\bA^2} \otimes \det \rhonat
   @>>>
  \scO_{\bA^2} \otimes \rhonat
   @>>>
  \scO_{\bA^2}
   @>>>
  \scO_0
   @>>> 0
\end{CD}
$$
that one has
\begin{align*}
 \dim \Hom(\scO_0 \otimes \mu, \scO_0 \otimes \nu)
  = \delta_{\mu \nu}, \\
 \dim \Ext^1(\scO_0 \otimes \mu, \scO_0 \otimes \nu)
  = a_{\mu \nu},
\end{align*}
and
$$
 \dim \Ext^2(\scO_0 \otimes \mu, \scO_0 \otimes \nu)
  = \dim \Hom(\scO_0 \otimes \nu,
   \scO_0 \otimes \mu \otimes \det \rho_\Nat).
$$
This is summarized in the {\em McKay quiver} of $G$,
whose vertices are irreducible representations of $G$
whose solid arrows from $\mu$ to $\nu$ are basis of
$\Ext^1(\scO \otimes \mu, \scO \otimes \nu)$,
and whose dashed arrows are basis of
$\Ext^2(\scO \otimes \mu, \scO \otimes \nu)$.

\begin{example} \label{eg:83}
As an example,
consider the case
when $G = \la \frac{1}{8}(1, 3) \ra$,
whose McKay quiver is shown
in Figure \ref{fg:McKay_quiver},
and its full subquiver consisting of non-special vertices
is shown in Figure \ref{fg:non-special_quiver}.
This clearly shows that the set
$\{ \scO_0 \otimes \rho \}_{\rho : \text{non-special}}$
does not form an exceptional collection.
\begin{figure}[htbp]
\begin{minipage}{.5 \linewidth}
\centering
\begin{psmatrix}[mnode=circle,colsep=1cm,rowsep=1cm]
 & 1 &[colsep=1.7cm] 0 \\
 2 & & & 6 \\[.5cm]
 3 & & & 7 \\
 & 4 &[colsep=1.7cm] 5
\end{psmatrix}
\psset{arrows=->,nodesep=1pt}
\ncline{1,2}{2,1}
\ncline{2,1}{3,1}
\ncline{3,1}{4,2}
\ncline{4,2}{4,3}
\ncline{4,3}{3,4}
\ncline{3,4}{2,4}
\ncline{2,4}{1,3}
\ncline{1,3}{1,2}
\ncline{1,2}{4,2}
\ncline{2,1}{4,3}
\ncline{3,1}{3,4}
\ncline{4,2}{2,4}
\ncline{4,3}{1,3}
\ncline{3,4}{1,2}
\ncline{2,4}{2,1}
\ncline{1,3}{3,1}
\psset{arrows=->,linestyle=dashed,offset=2pt,nodesep=1pt}
\ncline{1,3}{4,2}
\ncline{1,2}{4,3}
\ncline{2,1}{3,4}
\ncline{3,1}{2,4}
\ncline{4,2}{1,3}
\ncline{4,3}{1,2}
\ncline{3,4}{2,1}
\ncline{2,4}{3,1}
\caption{The McKay quiver}
\label{fg:McKay_quiver}
\end{minipage}
\begin{minipage}{.5 \linewidth}
\centering
\begin{psmatrix}[colsep=1cm,rowsep=1cm,mnode=circle]
 &[linecolor=lightgray] \textcolor{lightgray}{1}
 &[colsep=1.7cm,linecolor=lightgray] \textcolor{lightgray}{0} \\
 2 & & & 6 \\[.5cm]
 [linecolor=lightgray] \textcolor{lightgray}{3}
 & & & 7 \\
 & 4 &[colsep=1.7cm] 5
\end{psmatrix}
\psset{arrows=->,nodesep=1pt}
\ncline{4,2}{4,3}
\ncline{4,3}{3,4}
\ncline{3,4}{2,4}
\ncline{2,1}{4,3}
\ncline{4,2}{2,4}
\ncline{2,4}{2,1}
\psset{arrows=->,linestyle=dashed,offset=2pt,nodesep=1pt}
\ncline{2,1}{3,4}
\ncline{3,4}{2,1}
\caption{The non-special quiver}
\label{fg:non-special_quiver}
\end{minipage}
\end{figure}
\end{example}

\section{The case of cyclic groups}
 \label{sc:cyclic}

We prove the following in this section:

\begin{theorem} \label{th:cyclic}
Let $A$ be a finite small abelian subgroup of $\GL_2(\bC)$ and
$Y$ be the Hilbert scheme of $A$-orbits
in $\bA^2$.
Then there is an exceptional collection
$
 (E_1, \dots, E_n)
$
in $D^b \coh [\bA^2 / A]$ and a semiorthogonal decomposition
$$
 D^b \coh [\bA^2 / A]
  = \langle E_1, \dots, E_n, \Phi(D^b \coh Y) \rangle,
$$
where $n$ is the number of
indecomposable non-special representations of $G$.
\end{theorem}

%

To prove Theorem \ref{th:cyclic},
we recall Wunram's description of special representations
in the case of cyclic groups.
For relatively prime integers $0<q<n$,
consider the cyclic small subgroup
$
 G = \langle \frac{1}{n}(1, q) \rangle
$
of $\GL_2(\bC)$ generated by
$$
 \frac{1}{n}(1, q)
  = \begin{pmatrix}
     \zeta & 0 \\ 0 & \zeta^q
    \end{pmatrix},
$$
where $\zeta$ is a primitive $n$-th root of unity.
For $a \in \bZ/n\bZ$, let $\rho_a$ denote the irreducible representation of $G$
so that $\rho_a$ sends the above generator to $\zeta^a$.

Define integers $r$, $b_1, \dots, b_r$ and $i_0, \dots, i_{r+1}$ as follows:
Put $i_0 := n$, $i_1:=q$ and define $i_{t+2}, b_{t+1}$ inductively by
$$
i_t = b_{t+1} i_{t+1} - i_{t+2} \quad (0 < i_{t+2} < i_{t+1})
$$
until we finally obtain $i_r=1$ and $i_{r+1}=0$.
This gives a continued fraction expansion
$$
 \frac{n}{q}
  = b_1 - \cfrac{1}{b_2 - \cfrac{1}{\ddots -\cfrac{1}{b_r}}}
$$
and $-b_t$ is the self intersection number
of the $t$-th irreducible exceptional curve $C_t$
in the minimal resolution $Y$ of $\bA^2/G$.

Special representations are described as follows:
\begin{theorem}[{Wunram \cite{Wunram2}}]
Special representations are
$\rho_{i_0} = \rho_{i_{r+1}}, \rho_{i_1}, \dots, \rho_{i_r}$.
\end{theorem}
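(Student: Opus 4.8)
The plan is to transport everything to the toric minimal resolution and then read off the list of special representations via the geometric McKay correspondence.

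First I would set up the toric picture. Since $G$ is abelian, $Y$ is the smooth toric surface whose fan refines the positive quadrant $\sigma\subset N_\bR$, $N=\bZ^2+\bZ\cdot\tfrac1n(1,q)$, by the rays through the primitive lattice points $u_0,u_1,\dots,u_{r+1}$ on $\partial\Conv\big((\sigma\cap N)\setminus\{0\}\big)$, where $u_0=e_2$, $u_{r+1}=e_1$, $u_{t-1}+u_{t+1}=b_tu_t$, the $C_t=D_{u_t}$ ($1\le t\le r$) are the exceptional curves ($C_t\cdot C_t=-b_t$), and $D_{u_0}$, $D_{u_{r+1}}$ are the strict transforms of the images $\Dbar_y$, $\Dbar_x$ of the coordinate axes. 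Writing $i_t=n\cdot(\text{second coordinate of }u_t)$ — so $i_0=n$, $i_1=q$, $i_{r+1}=0$, in agreement with the recursion — one has the principal divisor $\sdiv(y^n)=nD_{u_0}+\sum_t i_t C_t$ on $Y$, hence $\pi^*\Dbar_y=D_{u_0}+\tfrac1n\sum_t i_t C_t$ as a $\bQ$-divisor. The facts I would use: intersection with the exceptional curves gives an isomorphism $\Pic Y\xrightarrow{\sim}\bZ^r$, $L\mapsto(L\cdot C_t)_t$, sending the dual-basis line bundles $L_t$ (characterized by $L_t\cdot C_{t'}=\delta_{tt'}$) to the standard basis; under it $L_1=\scO_Y(D_{u_0})$ and $L_r=\scO_Y(D_{u_{r+1}})$, the image of $\bZ\langle C_1,\dots,C_r\rangle$ has index $n$, and the quotient $\Pic Y/\bZ\langle C_1,\dots,C_r\rangle=\operatorname{Cl}(X)\cong\bZ/n\bZ$ is given by $(x_t)\mapsto\sum_t i_t x_t$, since $(i_1,\dots,i_r)$ spans the kernel modulo $n$ of the intersection matrix $(C_s\cdot C_t)$ — immediate from $i_{t-1}+i_{t+1}=b_ti_t$, $i_0=n$, $i_{r+1}=0$.

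Now the key input: Wunram's geometric McKay correspondence for rational surface singularities. Since $G$ is abelian the full sheaf $\scF_a$ of the reflexive module $M_a$ of $\rho_a$-semiinvariants is a line bundle with $\pi_*\scF_a=M_a$, so $\scF_a$ determines $M_a$; the correspondence then says there are exactly $r$ nontrivial indecomposable special modules, and their full sheaves are precisely $L_1,\dots,L_r$, each occurring once. Hence the special modules are $M_0$ (full sheaf $\scO_Y$) together with the reflexive sheaves $\pi_*L_1,\dots,\pi_*L_r$, whose classes in $\operatorname{Cl}(X)$ are $[L_1],\dots,[L_r]$, i.e. by the first paragraph the residues $i_1,\dots,i_r$. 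Combined with $i_0=n\equiv0\equiv i_{r+1}$ and $n>i_1>\dots>i_r=1>0$, this gives exactly the list $\rho_{i_0}=\rho_{i_{r+1}}=\rho_0,\rho_{i_1},\dots,\rho_{i_r}$ — provided one knows the identification $\operatorname{Cl}(X)\cong\bZ/n\bZ$ used above coincides with the canonical one for which $M_a$ has class $a$, i.e. that no unit of $\bZ/n\bZ$ has crept in.

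Pinning that down — computing one full sheaf on the nose — is the main technical point. I would anchor with $L_1=\scO_Y(D_{u_0})$: since $0<i_t<n$ for $1\le t\le r$ and $\pi^*\Dbar_y=D_{u_0}+\tfrac1n\sum_t i_t C_t$, the round-down is $D_{u_0}$, so $\pi_*L_1=\scO_X(\Dbar_y)$, and multiplication by $y^{-1}$ identifies $M_q$ with $\scO_X(\Dbar_y)$ (as $g$ acts on $y$ by $\zeta^q$), whence $\pi_*L_1\cong M_q$. As $q=i_1$, the combinatorial class $[L_1]=i_1$ is the canonical one, no unit has crept in, and $\pi_*L_t=M_{i_t}$ for all $t$; this completes the proof. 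The divisor-chasing hidden here — and in verifying $L_1=\scO_Y(D_{u_0})$ and the index/kernel statements of the first paragraph — is entirely governed by the recursion $i_{t-1}+i_{t+1}=b_ti_t$, which is where the continued-fraction expansion really enters.

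A self-contained variant, not invoking the geometric McKay correspondence, would instead compute $\scF_a$ for every $a$ directly as the toric line bundle $\scO_Y\big(\lfloor a\,\pi^*\Dbar_x\rfloor\big)$ and decide in each case whether $\pi^*M_a$ modulo torsion actually equals $\scF_a$ and is globally generated (the definition of speciality), using the companion sequence $j_t=n\cdot(\text{first coordinate of }u_t)$ (with $j_0=0$, $j_{r+1}=n$) and the discrete Wronskian identity $i_t j_{t+1}-i_{t+1}j_t=n$. Here the easy half is that $M_0,M_{i_1},\dots,M_{i_r}$ are special; the delicate half — and the real obstacle on this route — is showing that every remaining $M_a$ genuinely fails, its full sheaf $\scF_a$ acquiring base points on the exceptional locus.
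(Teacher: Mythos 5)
The paper gives no proof of this statement at all: it is quoted with attribution to Wunram's paper on cyclic quotient singularities, so there is no internal argument to compare yours against. On its own terms your proposal is correct, but it takes a genuinely different route from the cited source: you specialize Wunram's \emph{general} geometric McKay correspondence for rational surface singularities (the bijection between irreducible exceptional curves $C_t$ and nontrivial indecomposable special reflexive modules, characterized by $c_1(\scF_M)\cdot C_{t'}=\delta_{tt'}\rank M$) rather than redoing the direct computation of \cite{Wunram2}. Granting that general theorem --- which is proved independently of the cyclic case, so there is no circularity --- the remaining content is exactly what you supply: in the abelian case every $M_a$ has rank one, so the nontrivial specials are the $\pi_*L_t$ for the dual basis $L_t$ of $\Pic Y\cong\bZ^r$, and the identification $\pi_*L_t\cong M_{i_t}$ follows from your computation that the quotient $\Pic Y\to\operatorname{Cl}(X)\cong\bZ/n\bZ$ is $(x_t)\mapsto\sum_t i_tx_t$ (forced by $i_{t-1}+i_{t+1}=b_ti_t$, $i_0=n$, $i_{r+1}=0$, together with $\det$ of the intersection matrix being $\pm n$) plus the anchor $\pi_*L_1=\pi_*\scO_Y(D_{u_0})=\scO_X(\Dbar_y)\cong M_q$, which kills the unit ambiguity because $q$ generates $\bZ/n\bZ$ and the two surjections share a kernel. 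What this buys is a short, conceptual derivation at the price of a heavy black box; your closing paragraph rightly observes that a self-contained proof (the route of \cite{Wunram2} itself) must verify speciality, or its failure, for each $M_a$ individually, which is where the real work sits. The one point you must still nail down in writing is the sign convention --- whether $M_a$ is the $\rho_a$- or $\rho_a^{\vee}$-isotypic part of $R$, which has to match the convention the paper fixes implicitly through the modules $E_d$ --- since an error there replaces the answer $\{i_1,\dots,i_r\}$ by $\{n-i_1,\dots,n-i_r\}$, a genuinely different set.
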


For an integer $d$ with $0 \le d <n$,
there is a unique expression
\begin{equation}\label{eq:d_t}
d = d_1 i_1 + d_2 i_2 + \dots + d_r i_r
\end{equation}
where $d_i \in \bZ_{\ge 0}$ are non-negative integers satisfying
$$
 0 \le \sum_{t>t_0} d_t i_t < i_{t_0}
$$
for any $t_0$.
\begin{lemma}[{Wunram \cite[Lemma 1]{Wunram2}}]
 \label{lemma:wunramvanishing}
A sequence
$
 (d_1, \dots, d_r) \in (\bZ_{\ge 0})^r
$
is obtained from an integer $d \in [0, n-1]$ as above
if and only if the following hold:
\begin{itemize}
 \item $0 \le d_t \le b_t-1$ for any $t$.
 \item If $d_s= b_s-1$ and $d_t=b_t-1$ for $s<t$,
       then there is $l$ with $s<l<t$ and $d_l \le b_l-3$.
\end{itemize}
\end{lemma}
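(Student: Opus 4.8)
The plan is to prove the two implications separately, using throughout the recursion in the form $b_t i_t = i_{t-1} + i_{t+1}$ (for $1 \le t \le r$, with $i_{r+1} = 0$) together with the strict chain $i_0 > i_1 > \dots > i_r = 1 > i_{r+1} = 0$ (which in particular forces $b_t \ge 2$). I read the partial-sum conditions as $0 \le \sum_{u > t_0} d_u i_u < i_{t_0}$ for all $t_0 \in \{0, 1, \dots, r\}$.

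\emph{Necessity.} Suppose $(d_1, \dots, d_r)$ is the expansion of some $d \in [0, n-1]$. The condition at $t_0 = t-1$ gives $d_t i_t \le \sum_{u \ge t} d_u i_u < i_{t-1} = b_t i_t - i_{t+1} \le b_t i_t$, hence $d_t \le b_t - 1$. For the second bullet I would argue by contradiction: if $d_s = b_s - 1$, $d_t = b_t - 1$ with $s < t$ but $d_l \ge b_l - 2$ for all $s < l < t$, then substituting $b_u i_u = i_{u-1} + i_{u+1}$ and telescoping the second differences yields
$$
 \sum_{u \ge s} d_u i_u \;\ge\; (b_s - 1) i_s + \sum_{s < u < t} (b_u - 2) i_u + (b_t - 1) i_t \;=\; i_{s-1} + i_{t+1} \;\ge\; i_{s-1}
$$
(the middle sum being empty when $t = s + 1$), contradicting the condition at $t_0 = s - 1$.

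\emph{Sufficiency.} Conversely, assume $(d_1, \dots, d_r)$ satisfies the two bullets and put $d := \sum_u d_u i_u \ge 0$. Since the expansion is unique, it is enough to check $\sum_{u > t_0} d_u i_u < i_{t_0}$ for every $t_0$ (the case $t_0 = r$ is trivial, and $t_0 = 0$ gives in particular $d < n$, so $d \in [0,n-1]$). Call an index $u$ \emph{saturated} if $d_u = b_u - 1$ and a \emph{witness} if $d_u \le b_u - 3$; since a witness strictly between two saturated indices also lies between any wider pair, the second bullet is equivalent to asking for a witness strictly between every two \emph{consecutive} saturated indices. For $0 \le t_0 \le r$ let $S_{t_0}$ (resp.\ $S'_{t_0}$) be the maximum of $\sum_{u = t_0+1}^r c_u i_u$ over all $(c_{t_0+1}, \dots, c_r)$ with $0 \le c_u \le b_u - 1$ that have a witness between every two consecutive saturated indices (resp.\ over those same sequences which moreover carry a witness before their first saturated index, if any), ``saturated'' and ``witness'' now referring to $(c_u)$. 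Splitting on the value of $c_{t_0+1}$ gives $S_r = S'_r = 0$ and, for $0 \le t_0 < r$,
\begin{align*}
 S_{t_0}  &= \max\bigl\{\, (b_{t_0+1} - 2)\, i_{t_0+1} + S_{t_0+1}, \;\; (b_{t_0+1} - 1)\, i_{t_0+1} + S'_{t_0+1} \,\bigr\}, \\
 S'_{t_0} &= \max\bigl\{\, (b_{t_0+1} - 3)\, i_{t_0+1} + S_{t_0+1}, \;\; (b_{t_0+1} - 2)\, i_{t_0+1} + S'_{t_0+1} \,\bigr\},
\end{align*}
where a term with a negative coefficient represents an unavailable choice and is automatically dominated by the other. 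Downward induction on $t_0$, using $b_{t_0+1} i_{t_0+1} = i_{t_0} + i_{t_0+2}$ and $i_{t_0+1} > i_{t_0+2}$, then gives $S_{t_0} = i_{t_0} - 1$ and $S'_{t_0} = i_{t_0} - i_{t_0+1} - 1$. Since the suffix $(d_{t_0+1}, \dots, d_r)$ of our sequence lies in the first of these families, $\sum_{u > t_0} d_u i_u \le S_{t_0} = i_{t_0} - 1 < i_{t_0}$, as required, and hence $(d_1, \dots, d_r)$ is the expansion of $d \in [0,n-1]$.

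I expect necessity to be a routine telescoping estimate, and the real work to lie in sufficiency — specifically in pinning down the correct auxiliary maxima $S_{t_0}, S'_{t_0}$ (equivalently, in running the right finite-state bookkeeping of the ``witness'' condition) so that the downward induction closes cleanly. As an independent check, one can instead count the sequences satisfying the two bullets by a product of $2 \times 2$ transfer matrices $\left(\begin{smallmatrix} b_t - 1 & b_t - 2 \\ 1 & 1 \end{smallmatrix}\right)$; each is conjugate via $\left(\begin{smallmatrix} 1 & 1 \\ 0 & 1 \end{smallmatrix}\right)$ to $\left(\begin{smallmatrix} b_t & -1 \\ 1 & 0 \end{smallmatrix}\right)$, so the count equals $i_0 = n$, and this together with necessity and the bijectivity of $d \mapsto (d_1, \dots, d_r)$ forces the two sets of sequences to agree.
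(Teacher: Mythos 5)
The paper does not actually prove this lemma --- it is imported verbatim from Wunram (Lemma~1 of the cited paper) --- so there is no internal argument to compare yours against; what you have written is a correct, self-contained proof. I checked the two places where something could go wrong. For necessity, the telescoping identity $(b_s-1)i_s + \sum_{s<u<t}(b_u-2)i_u + (b_t-1)i_t = i_{s-1}+i_{t+1}$ does follow from $b_u i_u = i_{u-1}+i_{u+1}$ (write each middle term as $\Delta_{u-1}-\Delta_u$ with $\Delta_u = i_u - i_{u+1}$), and together with $d_u \ge 0$ for $u>t$ it contradicts the partial-sum bound at $t_0 = s-1$, as you say. For sufficiency, the case split on $c_{t_0+1}$ (saturated, witness, or neither) is exhaustive and matches the two families, the reduction of the second bullet to consecutive saturated indices is valid, and the downward induction closes: in both recursions the branch ending in $S'_{t_0+1}$ dominates because $i_{t_0+1} > i_{t_0+2}$, giving $S_{t_0} = b_{t_0+1}i_{t_0+1}-i_{t_0+2}-1 = i_{t_0}-1$ and $S'_{t_0} = i_{t_0}-i_{t_0+1}-1$; the formally negative branch when $b_{t_0+1}=2$ evaluates to $-1 \le S'_{t_0+1}$, so carrying it along is harmless as you claim. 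The transfer-matrix count is also consistent (the same induction gives $N_{t_0}=i_{t_0}$, $N'_{t_0}=i_{t_0}-i_{t_0+1}$, hence $N_0=n$) and would supply an alternative route to sufficiency by comparing cardinalities. The only step worth making explicit in a write-up is the one you use silently: a suffix $(d_{t_0+1},\dots,d_r)$ of a sequence satisfying the two bullets again satisfies them, because consecutive saturated indices of the suffix are consecutive in the full sequence.
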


Let $q' \in [0, n-1]$ be the integer with $qq' \equiv 1 \mod n$.
Then
$\langle \frac{1}{n}(1, q) \rangle$ coincides with
$\langle \frac{1}{n}(q', 1) \rangle$
as a subgroup of $\GL_2(\bC)$.
Introduce the dual sequence $j_0, \dots, j_{r+1}$ by
$j_0 = 0$, $j_1=1$ and
$ j_t = j_{t-1} b_{t-1} - j_{t-2}$
 for $t>1$.
Then one has $j_r = q'$ and $j_{r+1}=n$.

\begin{lemma}[{Wunram \cite[Lemma 2]{Wunram2}}]
Let $d = d_1 i_1 + \dots + d_r i_r$ be as in \eqref{eq:d_t}
and put $f=d_1 j_1 + \dots + d_r j_r$. Then one has $0 \le f \le n-1$ and $qf \equiv d \mod n$.
\end{lemma}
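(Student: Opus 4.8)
The plan is to establish the congruence and the inequality separately; the inequality will follow by applying Lemma~\ref{lemma:wunramvanishing} to the ``dual'' data $(n,q')$. For the congruence, first note that both defining recursions take the common form $x_t = b_{t-1}x_{t-1}-x_{t-2}$ for $2\le t\le r+1$: this is the recursion $j_t = b_{t-1}j_{t-1}-j_{t-2}$ for $(j_t)$ by definition, and it is the recursion $i_t = b_{t+1}i_{t+1}-i_{t+2}$ for $(i_t)$ after re-indexing. Since $q j_0 = 0$, $q j_1 = q$, $i_0 = n$ and $i_1 = q$, the base congruences $q j_0\equiv i_0$ and $q j_1\equiv i_1 \pmod n$ hold, so induction on $t$ using the common recursion gives $q j_t \equiv i_t \pmod n$ for all $t\in\{0,\dots,r+1\}$ (in accordance with $q j_r = qq'\equiv 1 = i_r$ and $q j_{r+1}=qn\equiv 0 = i_{r+1}$). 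Multiplying by $d_t$ and summing over $t=1,\dots,r$ gives $qf = \sum_t d_t(q j_t)\equiv \sum_t d_t i_t = d \pmod n$.

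For the inequality, $f\ge 0$ is immediate since all $d_t$ and $j_t$ are non-negative. For $f\le n-1$ I would use the reversal symmetry of Hirzebruch--Jung continued fractions. Put $b'_s := b_{r+1-s}$ and $k_s := j_{r+1-s}$ for $0\le s\le r+1$. Using the common recursion one checks that $(k_s)$ is strictly decreasing with $k_0=n$, $k_1=q'$, $k_r=1$, $k_{r+1}=0$, that it satisfies $k_s = b'_{s+1}k_{s+1}-k_{s+2}$ with each $b'_s\ge 2$, and hence that $(b'_1,\dots,b'_r)$ is precisely the Hirzebruch--Jung continued-fraction data of $n/q'$ and $(k_s)$ its associated sequence --- that is, $(k_s)$ plays for $\langle\frac1n(1,q')\rangle$ the role that $(i_t)$ plays for $G$ (here $n$ and $q'$ are again coprime with $0<q'<n$ whenever $n>1$, by $qq'\equiv 1$). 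Substituting $s=r+1-t$ then gives $f=\sum_{t=1}^r d_t j_t = \sum_{s=1}^r e_s k_s$ with $e_s := d_{r+1-s}$.

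It then suffices to verify that $(e_1,\dots,e_r)$ satisfies the two conditions of Lemma~\ref{lemma:wunramvanishing} for the data $(n,q')$. The inequalities $0\le e_s\le b'_s-1$ are just the first condition for $(d_t)$ read at index $r+1-s$. If $e_{s_1}=b'_{s_1}-1$ and $e_{s_2}=b'_{s_2}-1$ with $s_1<s_2$, then $d_{t_1}=b_{t_1}-1$ and $d_{t_2}=b_{t_2}-1$ with $t_1:=r+1-s_1 > t_2:=r+1-s_2$, so the second condition for $(d_t)$ yields an $l'$ with $t_2<l'<t_1$ and $d_{l'}\le b_{l'}-3$; then $l:=r+1-l'$ satisfies $s_1<l<s_2$ and $e_l\le b'_l-3$. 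By Lemma~\ref{lemma:wunramvanishing} applied to $(n,q')$, the sequence $(e_s)$ is therefore the expansion \eqref{eq:d_t} of some integer in $[0,n-1]$, and that integer is $\sum_s e_s k_s = f$. Hence $0\le f\le n-1$, which together with the first part proves the lemma.

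The step that I expect to require the most care is the reversal correspondence in the second paragraph: checking that the reversed data $(b'_s)$, $(k_s)$ really is the continued-fraction data of $n/q'$ (correct boundary values $k_r=1$, $k_{r+1}=0$, strict monotonicity, and $b'_s\ge 2$) so that Lemma~\ref{lemma:wunramvanishing} applies verbatim to $(n,q')$, together with disposing of the degenerate case $r=1$ (which forces $q=q'=1$ and makes the statement trivial). Everything else is formal manipulation once the common three-term recurrence and this symmetry are in hand.
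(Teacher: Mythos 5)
Your argument is correct, but note that the paper itself offers no proof of this statement: it is quoted verbatim as Lemma~2 of Wunram's paper \cite{Wunram2}, so there is nothing internal to compare against. What you have written is a complete, self-contained proof. The two halves both check out: for the congruence, the sequences $(i_t)$ and $(j_t)$ do satisfy the common three-term recurrence $x_t=b_{t-1}x_{t-1}-x_{t-2}$ (rearrange $i_{t-2}=b_{t-1}i_{t-1}-i_t$), and with $qj_0\equiv i_0$ and $qj_1=i_1$ the induction gives $qj_t\equiv i_t \pmod n$, hence $qf\equiv d$. For the bound $f\le n-1$, the reversal symmetry is exactly the right tool: $(j_{r+1-s})_s$ is strictly decreasing from $n$ to $0$ with the reversed coefficients $b_{r+1-s}\ge 2$, so by uniqueness of the Hirzebruch--Jung algorithm it is the sequence attached to $n/q'$, the combinatorial conditions of Lemma~\ref{lemma:wunramvanishing} are manifestly invariant under reversing the index, and the ``if'' direction of that lemma then exhibits $f=\sum_s e_s k_s$ as an integer in $[0,n-1]$. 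You correctly identify the only delicate point (verifying that the reversed data really is the continued-fraction data of $n/q'$, plus the trivial case $q=q'=1$), and your treatment of it is adequate.
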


Let $R=\bC[x, y]$ be the coordinate ring of $\bA^2$ and put
$$R_k = R/(x, y^k).$$
For an integer $d\in [0, n-1]$ with $\rho_d$ non-special, take $t$ with
$i_{t-1}> d > i_t$.
Then we define
$$E_d=R_{j_t} \otimes \rho_{d-(j_t-1)q}.$$
Note that the socle of $E_d$ is $\scO_{0} \otimes \rho_d$ and
one has the direct sum decomposition
$
 E_d \cong \bigoplus_{0 \le l < j_t} \rho_{d-l q}
$
as a representation of $G$.
We show that $\{E_d \mid d\text{: non-special}\}$ is a desired exceptional collection
(with respect to the order of $d \in [1, n-1]$).

\newcommand{\len}{\preceq}
\newcommand{\lsn}{\prec}

We first show the following:
\begin{proposition} \label{pr:generate}
The following two triangulated subcategories are equal:
$$
 \langle
  \scO_{0} \otimes \rho
 \rangle_{\rho : \text{non-special}}
  = \langle E_d \rangle_{\rho_d :\text{non-special}}.
$$
\end{proposition}
We introduce the following order $\len$ on $\bZ/n\bZ$:
for $a, b \in \bZ/n\bZ$, we write $a \len b$ if $a' \le b'$ holds
for the representatives  $a', b' \in \bZ \cap [0, n-1]$ of $a, b$.
We also write $x \len y$ for $x, y \in \bZ$ if the inequality holds for their classes in $\bZ/n\bZ$.

\begin{lemma}\label{lemma:inequality}
If $0<l<j_t$, then one has $i_{t-1} \len lq$.
\end{lemma}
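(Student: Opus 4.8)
The plan is to reduce the lemma to two elementary identities relating the continued–fraction data $(i_t)$ and the dual data $(j_t)$, and then to argue inside the lattice $\bZ^2$. The two identities are: (i) $q\,j_t \equiv i_t \pmod n$ for every $t$; and (ii) $i_{t-1}j_t - i_t j_{t-1} = n$ for every $t\ge 1$. Both follow by a one–line induction from the recursions $i_{t+1}=b_t i_t - i_{t-1}$ and $j_{t+1}=b_t j_t - j_{t-1}$ (mere rewritings of the defining relations): for (i) the base cases are $q j_0 = 0\equiv i_0$ and $q j_1 = q = i_1$, and $q j_{t+1}=b_t(q j_t)-q j_{t-1}\equiv b_t i_t - i_{t-1}=i_{t+1}$; for (ii) the base case is $i_0 j_1 - i_1 j_0 = n$ and the inductive step is an immediate cancellation. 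By (i) we may write $q j_t = n p_t + i_t$ with $p_t\in\bZ$, and then (ii) yields $p_t j_{t-1}-p_{t-1}j_t = 1$; in particular $\{(j_t,p_t),(j_{t-1},p_{t-1})\}$ is a $\bZ$–basis of $\bZ^2$. I will also use the monotonicities $0<i_t<i_{t-1}$ and $0<j_{t-1}<j_t$ for $2\le t\le r$, both of which come from $b_t\ge 2$.

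The cases $t=1$ and $t=r+1$ are trivial: $0<l<j_1=1$ is vacuous, and $i_r=1\preceq lq$ just says $lq\not\equiv 0\bmod n$, which holds since $\gcd(q,n)=1$. So fix $2\le t\le r$ and suppose, for contradiction, that $0<l<j_t$ with $r:=lq\bmod n$ satisfying $0\le r<i_{t-1}$. Set $p:=(lq-r)/n\in\bZ$ and write $(l,p)=\alpha(j_t,p_t)+\beta(j_{t-1},p_{t-1})$ with $\alpha,\beta\in\bZ$. Reading off the first coordinate gives $l=\alpha j_t+\beta j_{t-1}$, while applying the linear form $(x,y)\mapsto qx-ny$ and using $q j_s - n p_s = i_s$ gives $r=\alpha i_t+\beta i_{t-1}$.

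Then a short case analysis on $\beta$ produces a contradiction in every case. If $\beta=0$, then $0<l=\alpha j_t<j_t$ forces the integer $\alpha$ into $(0,1)$, impossible. If $\beta\ge 1$, then $r<i_{t-1}$ forces $\alpha i_t=r-\beta i_{t-1}<0$, so $\alpha=-a$ with $a\ge 1$; the inequality $(\beta-1)i_{t-1}<a\,i_t$ together with $i_{t-1}>i_t$ gives $\beta\le a$, hence $l=-a j_t+\beta j_{t-1}\le -a(j_t-j_{t-1})<0$, contradicting $l>0$. If $\beta\le -1$, then $r\ge 0$ forces $\alpha i_t\ge|\beta|\,i_{t-1}>0$, so $\alpha\ge 1$; then $|\beta|\,i_{t-1}\le\alpha i_t$ with $i_{t-1}>i_t$ gives $|\beta|\le\alpha-1$, hence $l=\alpha j_t-|\beta|j_{t-1}\ge j_t+(\alpha-1)(j_t-j_{t-1})\ge j_t$, contradicting $l<j_t$. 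Thus no such $l$ exists, i.e. $lq\bmod n\ge i_{t-1}$, which is exactly $i_{t-1}\preceq lq$.

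The only delicate point is identity (i): one must line up the base cases and the sign so that $q j_t\equiv i_t$ (rather than $\equiv -i_t$) holds uniformly in $t$. Once (i) and (ii) are in hand, passing to $\bZ^2$ with the unimodular basis $\{(j_t,p_t),(j_{t-1},p_{t-1})\}$ turns the statement into the elementary case analysis above; everything else is routine. The same lattice picture incidentally shows that $j_t$ is the least positive $l$ with $lq\bmod n<i_{t-1}$, the value attained being $i_t$, which is the sharp form of the lemma.
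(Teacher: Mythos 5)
Your proof is correct, but it takes a genuinely different route from the paper's. The paper expands $l$ as an admissible combination $l = d_1 j_1 + \dots + d_{t-1} j_{t-1}$ in the sense of Lemma~\ref{lemma:wunramvanishing} (applied to the dual data $\{j_s\}$), uses $q j_s \equiv i_s \pmod{n}$ to convert this into $lq \equiv d_1 i_1 + \dots + d_{t-1} i_{t-1} \pmod{n}$, and then observes that any nonzero admissible sum supported in $\{1,\dots,t-1\}$ lies in $[i_{t-1}, n-1]$, since $i_s \ge i_{t-1}$ for $s \le t-1$ and the total is at most $n-1$ by Wunram's characterization. You instead bypass the combinatorial lemma entirely: the two identities $q j_s \equiv i_s \pmod{n}$ and $i_{t-1} j_t - i_t j_{t-1} = n$ (both immediate inductions from the shared recursion $u_{s+1} = b_s u_s - u_{s-1}$) let you lift everything to $\bZ^2$, where $\{(j_t,p_t),(j_{t-1},p_{t-1})\}$ is a unimodular basis, and the conclusion falls out of a three-case sign analysis on the coordinates $(\alpha,\beta)$; I checked each case and the inequalities close up correctly using $0<i_t<i_{t-1}$ and $0<j_{t-1}<j_t$. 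Your argument is self-contained and more elementary in that it does not require knowing that $l$ admits an admissible expansion in the $\{j_s\}$ (a point the paper asserts without detailed justification), and it yields the sharp supplement that $j_t$ is the least positive $l$ with $lq \bmod n < i_{t-1}$; the paper's argument is shorter on the page because it delegates the lattice geometry to Wunram's lemma, which is needed elsewhere in the section anyway. One cosmetic remark: you reuse the letter $r$ both for the length of the continued fraction and for the residue $lq \bmod n$; rename the latter to avoid a clash with $i_r$ and $j_r$.
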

\begin{proof}
We can write $l = d_1 j_1 + \dots + d_{t-1} j_{t-1}$
as in \eqref{eq:d_t} by using $\{j_t\}$ instead of $\{i_t\}$,
where $(d_1, \dots, d_{t-1}, 0, \dots, 0)$ satisfies the condition in Lemma \ref{lemma:wunramvanishing}.
Then we have $lq \equiv d_1 i_1 + \dots + d_{t-1} i_{t-1} \mod n$.
Since $(d_1, \dots, d_{t-1}, 0, \dots, 0)$ satisfies the condition in Lemma \ref{lemma:wunramvanishing} and is non-zero, 
$d_1 i_1 + \dots + d_{t-1} i_{t-1}$ is an integer in $[i_{t-1}, n-1]$.
This implies the desired inequality.
\end{proof}

Note that the following hold by the definition of $\len$.
\begin{lemma}\label{lemma:obvious}
If $b \ne 0$,
$a+b \len a$ implies $a+b \len b$.
\end{lemma}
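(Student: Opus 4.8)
The plan is to derive the statement directly from the definition of $\len$ by passing to integer representatives. First I would fix the representatives $a', b' \in \bZ \cap [0,n-1]$ of $a$ and $b$; the hypothesis $b \neq 0$ means exactly that $b' \geq 1$. The representative of $a+b$ in $[0,n-1]$ is $a'+b'$ when $a'+b' \leq n-1$, and $a'+b'-n$ when $a'+b' \geq n$. So the argument naturally splits into these two cases, according to whether the sum of the chosen representatives ``wraps around'' modulo $n$.

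In the non-wrapping case $a'+b' \leq n-1$, the representative of $a+b$ equals $a'+b' \geq a'+1 > a'$, so $a+b \len a$ fails and the implication holds vacuously. In the wrapping case $a'+b' \geq n$, the representative of $a+b$ is $a'+b'-n$, and since $a' \leq n-1$ we get $a'+b'-n \leq b'-1 < b'$; this is precisely $a+b \len b$ (indeed $a+b \lsn b$). Putting the two cases together proves the lemma.

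I do not expect any real obstacle here: this is a bookkeeping fact about the cyclic order, which is why the text introduces it as immediate from the definition of $\len$. The only point requiring a line of care is the case split on whether $a'+b'$ exceeds $n-1$, together with the observation that it is exactly the hypothesis $b \neq 0$ --- i.e.\ $b' \geq 1$ --- that makes the assumption $a+b \len a$ incompatible with the non-wrapping case, leaving only the wrapping case in which the conclusion is automatic.
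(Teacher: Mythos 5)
Your proof is correct: the case split on whether $a'+b'$ wraps modulo $n$ shows the hypothesis $a+b\len a$ forces the wrapping case, where $a'+b'-n\le b'-1$ gives $a+b\lsn b$. The paper offers no proof at all (it records the lemma as immediate from the definition of $\len$), and your argument is exactly the routine verification being implicitly invoked, so there is nothing further to compare.
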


\begin{corollary}\label{cor:triangular}
If $i_{t-1}> d > i_t$,
then we have $d \lsn d-lq$ for $0< l < j_t$.
\end{corollary}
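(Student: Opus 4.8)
The plan is to deduce Corollary \ref{cor:triangular} by unwinding the definition of $\len$ and then applying Lemmas \ref{lemma:inequality} and \ref{lemma:obvious}. First I would record the relevant representatives in $[0,n-1]$. Since $i_0 = n > i_1 > \dots > i_{r+1} = 0$, the hypothesis $i_{t-1} > d > i_t$ forces $0 < d < i_{t-1} \le n$, so $d$ is its own representative in $[0,n-1]$; in particular $d \neq 0$. By Lemma \ref{lemma:inequality} we have $i_{t-1} \len lq$, so the representative $m$ of $lq$ in $[0,n-1]$ satisfies $0 < i_{t-1} \le m \le n-1$. Combined with $d < i_{t-1}$ this yields $0 < d < m < n$, and also $lq \not\equiv 0$, hence $d \neq d - lq$.

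Next I would compute the representative of $d - lq$. Since $d - lq \equiv d - m \pmod{n}$ and $2 - n \le d - m \le -1$, the representative of $d - lq$ is $d - m + n = d + (n-m)$, which lies in $[0,n-1]$. As the representative of $-lq$ is $n-m$ and $d > 0$, this computation shows $-lq \lsn d - lq$.

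Finally, write $d - lq = d + (-lq)$ and apply Lemma \ref{lemma:obvious} with $a = d$ and $b = -lq$; this is legitimate because $-lq \neq 0$. In contrapositive form the lemma states that $b \lsn a+b$ implies $a \lsn a+b$, so from $-lq \lsn d - lq$ we conclude $d \lsn d - lq$, which is exactly the assertion. The only thing requiring care is the bookkeeping between the strict order $\lsn$ and the non-strict order $\len$ together with the nonvanishing side conditions of Lemma \ref{lemma:obvious}; there is no real obstacle here, since the underlying arithmetic amounts to a single reduction modulo $n$.
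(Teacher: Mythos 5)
Your proof is correct and follows essentially the same route as the paper: both arguments hinge on Lemma \ref{lemma:inequality} to force the representative of $lq$ to be at least $i_{t-1}>d$, and then a one-step wraparound argument via Lemma \ref{lemma:obvious} (which you invoke in contrapositive form with $a=d$, $b=-lq$ rather than, as the paper does, directly with $a=lq$, $b=d-lq$) to conclude $d \lsn d-lq$. Your explicit computation of the representative of $d-lq$ as $d+(n-m)$ in fact already gives the strict inequality outright, so the final appeal to Lemma \ref{lemma:obvious} is harmless but redundant.
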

\begin{proof}
Since $i_{t-1} \len lq$ by Lemma \ref{lemma:inequality}, we apply Lemma \ref{lemma:obvious} for $a=lq$ and $b=d-lq$
to obtain $d \len d-lq$.
The equality does not hold since $(n, q)=1$.
\end{proof}

\begin{lemma}\label{lemma:non-special}
If $i_{t-1}> d > i_t$, then $\rho_{d-lq}$ is non-special for $0 \le l < j_t$.
\end{lemma}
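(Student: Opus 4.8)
\emph{Plan.} The idea is to compare the indices of the representations occurring in $E_d$ with the ``dual'' special indices $j_0=0,j_1,\dots,j_r$. The case $l=0$ is immediate: since $i_0>i_1>\cdots>i_{r+1}$ and $i_{t-1}>d>i_t$, the integer $d$ is not any $i_s$, so $\rho_d$ is non-special. From now on assume $0<l<j_t$, so $j_t>1$ and hence $t\ge 2$.

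First I would rephrase ``special'' as a congruence. Taking $d=i_s$ in Wunram's lemma $q\sum_u d_u j_u\equiv\sum_u d_u i_u\pmod n$ gives $q j_s\equiv i_s$, equivalently $q' i_s\equiv j_s$, modulo $n$ for $0\le s\le r$. Hence $\rho_{d-lq}$ is special iff $d-lq\equiv i_s\pmod n$ for some $s$, iff $q'd-l\equiv j_s\pmod n$ for some $s\in\{0,\dots,r\}$. Writing $f:=q'd\bmod n\in[0,n-1]$ and recalling $j_0=0$, it thus suffices to show that the block $\{f-j_t+1,\dots,f\}$ contains, modulo $n$, no element of $\{0,j_1,\dots,j_r\}$.

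Next I would locate $f$. Expand $d=d_1 i_1+\cdots+d_r i_r$ as in \eqref{eq:d_t}. Because $d<i_{t-1}$ we have $d_u=0$ for $u<t$ (the least such $u$ would give $d\ge i_u\ge i_{t-1}$); then $d=\sum_{u\ge t}d_u i_u$, and if $d_t=0$ the partial-sum condition with $t_0=t$ would force $d<i_t$, against $d>i_t$, so $d_t\ge 1$. Let $v$ be the largest index with $d_v\ne 0$, so $t\le v\le r$. By the same lemma of Wunram, $f=\sum_{u=t}^v d_u j_u$. Using $d_t\ge 1$ and $d_v\ge 1$ — and, when $v=t$, that $d=d_t i_t>i_t$ forces $d_t\ge 2$ — one gets $f\ge j_t+j_v$, hence $f-j_t+1\ge j_v+1$. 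For the matching upper bound I would use that reversing the coefficient sequence preserves the conditions of Lemma~\ref{lemma:wunramvanishing}: $(d_r,\dots,d_1)$ is then a valid expansion for the continued fraction $b_r,\dots,b_1$ of $n/q'$, whose $i$-sequence is $n=j_{r+1},j_r,\dots,j_1,j_0=0$, and it represents the integer $f$; since its support lies in indices $\ge r+1-v$, the partial-sum condition analogous to \eqref{eq:d_t} for that continued fraction gives $f<j_{v+1}$. Thus $j_v<f<j_{v+1}$, with $1\le j_v+1\le f-j_t+1$ and $f\le j_{v+1}-1\le n-1$, so the block $\{f-j_t+1,\dots,f\}$ lies inside the open interval $(j_v,j_{v+1})$ and does not wrap around modulo $n$. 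As $\{0,j_1,\dots,j_r\}\subseteq\{j_0,\dots,j_v\}\cup\{j_{v+1},\dots,j_r\}$, the block is disjoint from it, and by the previous paragraph this shows $\rho_{d-lq}$ is non-special for every $0\le l<j_t$.

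The only step that is not bookkeeping with the recursions $i_t=b_{t+1}i_{t+1}-i_{t+2}$ and $j_t=b_{t-1}j_{t-1}-j_{t-2}$ is the inequality $f<j_{v+1}$: the greedy estimate only yields $f<b_v j_v$, and shaving off the residual $j_{v-1}$ is precisely where the second bullet of Lemma~\ref{lemma:wunramvanishing} (the clause producing an index $l$ with $d_l\le b_l-3$) — equivalently, the invariance of that lemma under reversal of the continued fraction — is needed. I expect this to be the main obstacle.
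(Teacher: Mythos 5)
Your argument is correct, but it takes a genuinely different route from the paper's. Both proofs start from the same place: expand $d=d_t i_t+\dots+d_r i_r$ as in \eqref{eq:d_t}, set $f=\sum_u d_u j_u$, and use Wunram's duality $qf\equiv d$, $0\le f\le n-1$. The paper then argues by contradiction in four lines: if $d-lq\equiv i_s$, Corollary~\ref{cor:triangular} forces $s<t$, whence $f\equiv j_s+l$ with $j_s+l<2j_t$; but non-speciality of $d$ gives $f\ge 2j_t$, and since both $f$ and $j_s+l$ lie in $[0,n-1]$ this is a contradiction. You instead prove the sharper two-sided estimate $j_v<f<j_{v+1}$ (with $v$ the top of the support of the expansion of $d$) and deduce that the entire block $\{f-j_t+1,\dots,f\}$ sits strictly between consecutive terms of the $j$-sequence. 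Your lower bound $f\ge j_t+j_v$ is essentially the paper's $f\ge 2j_t$; the upper bound $f<j_{v+1}$ is the genuinely new ingredient, and your justification via the reversal symmetry of Lemma~\ref{lemma:wunramvanishing} (the reversed coefficient sequence being a valid expansion for the continued fraction of $n/q'$, whose $i$-sequence is the reversed $j$-sequence) is sound --- the two bullet conditions are manifestly invariant under reversal, and the identification $i'_s=j_{r+1-s}$, $b'_s=b_{r+1-s}$ follows from the recursions. What the paper's route buys is brevity and reuse of Corollary~\ref{cor:triangular}, which is needed anyway for Proposition~\ref{prop:generate}; what yours buys is independence from that corollary and more precise information about where $f$ lies, at the cost of importing the standard but unstated duality of Hirzebruch--Jung continued fractions under $q\leftrightarrow q'$.
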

\begin{proof}
Write $d=d_t i_t + d_{t+1}i_{t+1} + \dots + d_r i_r$ as in \eqref{eq:d_t}
and put $f=d_t j_t + d_{t+1}j_{t+1} + \dots + d_r j_r$.
Then since $\rho_d$ is non-special, we have $f \ge 2j_t$.

Assume that $\rho_{d-lq}$ is special.
Then $d-lq \equiv i_s$ for some $s $ and the above corollary implies $s<t$.
Moreover, $d \equiv i_s + lq$ yields $f \equiv j_s +l$.
On the other hand, since $j_s$ and $l$ are smaller than $j_t$, we see $j_s+l < 2j_t$.
This contradicts  $n> f \ge 2j_t$.
\end{proof}

\begin{proof}[Proof of Proposition \ref{pr:generate}]
Lemma \ref{lemma:non-special} implies that $E_d$ belongs to
$ \langle
  \scO_{0} \otimes \rho
 \rangle_{\rho : \text{non-special}}$.
Moreover, note that the socle of $E_d$ is $\scO_{0} \otimes \rho_d$.
Then,  for non-special $\rho_f$,  it follows from Corollary \ref{cor:triangular} and the reverse induction on $f$ with respect to $\len$ that $\scO_{0} \otimes \rho_f$ belongs to
$\langle E_d \rangle_{\rho_d :\text{non-special}}$.
\end{proof}

\begin{proposition}\label{pr:exceptional}
$\{E_d\}_{\rho_d:\text{non-special}}$ forms an exceptional collection.
\end{proposition}
\begin{proof}
Take $E_d, E_{d'}$ with $d' \le d$ and suppose $i_{t-1} > d > i_t$ and $i_{t'-1} > d' > i_{t'}$.
To compute $Ext^i(E_d, E_{d'})$,
consider the following projective resolution of $E_d$:
$$
\begin{CD}
0 \to R \otimes \rho_{1+d+q} @>{\begin{pmatrix}{y^{j_t}} \\{-x}\end{pmatrix}}>> R \otimes \rho_{1+d+q-j_tq} \oplus R \otimes \rho_{d+q} @>{\begin{pmatrix}x & y^{j_t}\end{pmatrix}}>> R \otimes \rho_{d+q-j_tq} \to E_d \to 0.
\end{CD}
$$
Then $\bR\Hom_R(E_d, E_{d'})$ splits into the direct sum of
$$
R_{j_{t'}} \otimes \rho_{d'-d+(j_t-j_{t'})q} \overset{\alpha} \to R_{j_{t'}} \otimes \rho_{d'-d-j_{t'}q}
$$
and
$$
R_{j_{t'}} \otimes \rho_{d'-d-1 +(j_t-t_{t'})q} \overset{\beta}\to R_{j_{t'}} \otimes \rho_{d'-d-1 -j_{t'}q}
$$
where $\alpha$ and $\beta$ are the multiplications by $y^{j_t}$.
The degrees of terms of these complexes are determined so that
$\Hom(E_d, E_{d'}) = (\ker \alpha)^G$, 
$\Ext^1(E_d, E_{d'})=(\coker \alpha)^G \oplus (\ker \beta)^G$
and
$\Ext^2(E_d, E_{d'})=(\coker \beta)^G$.

As a representation of $G$, $\ker \alpha$ is the direct sum of
$\rho_{d'-d+lq}$ for $0 \le l < j_t$.
Assume that $\rho_{d'-d+lq}$ is trivial, i.e., $d-d' \equiv lq$.
If $l \ne 0$, then Lemma \ref{lemma:inequality} implies $i_{t-1} \len lq$,
which contradicts $0 \le d' \le d < i_{t-1}$ and $d-d' \equiv lq$.
Therefore, we obtain $l=0$ and $d=d'$.
Thus $(\ker \alpha)^G=0$ if $d \ne d'$ and it is one-dimensional if $d=d'$.
$\coker \alpha$ is the direct sum of $\rho_{d'-d-(j_{t'}-l)q}$ for $0 \le l < j_t$.
Assume $\rho_{d'-d-(j_{t'}-l)q}$ is trivial.
Then we see $d-d'+i_{t'} \equiv lq$, which again contradicts Lemma \ref{lemma:inequality}.
Hence we obtain $(\coker \alpha)^G=0$.
In a similar way, we can show $(\ker \beta)^G=(\coker \beta)^G=0$ and we are done.
\end{proof}

Since $ \langle
  \scO_{0} \otimes \rho
 \rangle_{\rho : \text{non-special}}$
 is the right orthogonal complement of the essential image of $\Phi$,
 Propositions \ref{pr:generate} and \ref{pr:exceptional} imply Theorem \ref{th:cyclic}.

\section{Equivariant McKay correspondence}
 \label{sc:equivariant}
Let $G$ be a finite subgroup of $\GL_2(\bC)$ and
put $G_0 := G \cap \SL(2, \bC)$.
Then $G_0$ is a normal subgroup of $G$
and $A := G / G_0$ is a cyclic group.
There is a natural $G$-action on
$
 Y_0 := \GoHilb \bA^2
$
such that an element $g \in G$
sends a subschema $Z \in \GoHilb \bA^2$
to its image $g \cdot Z$
by the action $g : \bA^2 \to \bA^2$.
Since $Z$ is $G_0$-invariant
by the definition of $\GoHilb \bA^2$,
this $G$-action on $Y_0$ descends
to an $A = G / G_0$-action
on the scheme $Y_0$.

\begin{theorem} \label{th:equivariant}
There is a derived equivalence
$$
 \Phi_0 : D^b \coh [Y_0 / A] \simto D^b \coh [\bA^2 / G].
$$
\end{theorem}

\begin{proof}
The groups $G$ and $A$ acts naturally on $Y_0$,
and there is a natural morphism
\begin{align*}
 \varphi : [Y_0/G] \to [Y_0/A]
\end{align*}
coming from the surjection
$
 G \twoheadrightarrow A
$
The push-forward functor
$$
 \varphi_* : D^b \coh [Y_0 / G] \to D^b \coh [Y_0 / A]
$$
sends a $G$-equivariant cohenrent sheaf $\scE$ on $Y_0$
to the $G_0$-invariant subsheaf $\scE^{G_0}$
equipped with the natural $A$-equivariant structure.
The pull-back functor
$$
 \varphi^* :
  D^b \coh [Y_0 / A] \to D^b \coh [Y_0 / G]
$$
sends an $A$-equivariant coherent sheaf on $Y_0$
to the same sheaf
considered as a $G$-equivariant coherent sheaf
through the surjective homomorphism
$
 G \twoheadrightarrow A.
$

Consider the diagram
$$
\begin{psmatrix}[rowsep=5mm]
 & \scZ & \\[-7mm]
 & \rotatebox{270}{$\subset$} \\[-3mm]
 & Y_0 \times \bA^2 & \\[6mm]
 Y_0 & & \bA^2 
\end{psmatrix}
\psset{arrows=->,shortput=nab,nodesep=4pt}
\ncline{3,2}{4,1}_{\pi_{Y_0}}
\ncline{3,2}{4,3}^{\pi_{\bA^2}}
$$
where
$
 \scZ \subset Y_0 \times \bA^2
$
is the universal subscheme
and
$\pi_{\bA^2}$ and $\pi_{Y_0}$ are the natural projections.
By taking the quotient of the whole diagram
with respect to the action of $G$,
one obtains another  diagram
$$
\begin{psmatrix}[rowsep=5mm]
 & \big[ \scZ / G \big] & \\[-7mm]
 & \rotatebox{270}{$\subset$} \\[-3mm]
 & \big[ Y_0 \times \bA^2/G \big] & \\[6mm]
 \big[ Y_0 / G \big] & & \big[ \bA^2 / G \big].
\end{psmatrix}
\psset{arrows=->,shortput=nab,nodesep=4pt}
\ncline{3,2}{4,1}_{\pi_{[Y_0/G]}}
\ncline{3,2}{4,3}^{\pi_{[\bA^2/G]}}
$$
Then we can define an integral functor
$$
 \Phi_0 : D^b \coh [Y_0/A] \to D^b \coh [\bA^2/G]
$$
by
$$
 \Phi_0 (-) = \pi_{[\bA^2/G]*}(\scO_{[\scZ/G]} \otimes
  \pi_{[Y_0/G]}^*(\varphi^*(-))),
$$
and another functor
$$
 \Psi_0 : D^b \coh [\bA^2/G] \to D^b \coh [Y_0/A]
$$
by
$$
 \Psi_0(-) =
  \varphi_* ( \pi_{[Y_0/G]*}
   (\scO_{[\scZ/G]}^{\vee}[2]
  \otimes \det \rhonat \otimes \pi_{[\bA^2/G]}^*(-))),
$$
where
$$
 \scO_{[\scZ/G]}^\vee =
  {\mathop{\bR \scH om}\nolimits}_{
   \scO_{[Y_0 \times \bA^2/G]}}
   (\scO_{[\scZ/G]}, \scO_{[Y_0 \times \bA^2/G]}).
$$
The functor $\Psi_0$ is both left and right adjoint to $\Phi_0$ since
\begin{itemize}
 \item
the functor $\pi_{[\bA^2/G]*}$ is
right adjoint to ${\pi_{[\bA^2/G]}^*}$
and left adjoint to
$$
 \pi_{[\bA^2/G]}^{!}(-)
  = \pi_{[\bA^2/G]}^*(-) \otimes
   \pi_{[Y/G]}^*(\omega_{[Y_G]})[2]
  = \pi_{[\bA^2/G]}^*(-) \otimes \det \rhonat[2],
$$
 \item
the functor $\pi_{[Y_0/G]*}$ is right adjoint to
$\pi_{[Y_0/G]}^*$
and left adjoint to
$$
 \pi_{[Y_0/G]}^!(-)
  = \pi_{[Y_0/G]}^*(-) \otimes
   \pi_{[\bA^2/G]}^*(\omega_{[\bA^2/G]})[2]
  = \pi_{[Y_0/G]}^*(-) \otimes \det \rhonat [2],
$$
 \item
the functor $- \otimes \scO_{[\scZ/G]}$ is
both left and right adjoint
to $- \otimes \scO_{[\scZ/G]}^\vee$, and
 \item
the functor $\varphi_*$ is both left and right adjoint
to $\varphi^*$.
\end{itemize}

By restricting $G$-actions to $G_0$-actions
and forgetting $A$-actions,
we can also define the functor
$
 \Phi_0': D^b \coh Y_0 \to D^b \coh [\bA^2/G_0]
$
and its adjoint $\Psi_0'$ in the same way as above,
which are equivalences by
\cite{Kapranov-Vasserot,
Bridgeland-King-Reid}.

Let $\alpha$ be any object of $D^b \coh [Y_0/A]$ and
consider the adjunction morphism
$
 \nu: \alpha \to \Psi_0 \Phi_0 (\alpha).
$
If we send the morphism $\nu$
by the pull-back functor
$$
 \varphi_{A}^* : D^b \coh [Y_0/A] \to D^b \coh Y_0
$$
along the morphism
$\varphi_{A} : Y_0 \to [Y_0/A]$,
then the resulting morphism
$\varphi_{A}^*(\nu)$ is an isomorphism
in $D^b \coh Y_0$
since $\Phi_0'$ and $\Psi_0'$ are equivalences.
Although the functor $\varphi_{A}^*$ is not full,
it is faithful and
this shows that the morphisms $\nu$ is an isomorphism.
We can also show that the adjunction morphism
$\Phi_0 \Psi_0 (\beta) \to \beta$ is an isomorphism
for any object $\beta$ of $D^b \coh [\bA^2/G]$
in the same way,
so that $\Phi_0$ and $\Psi_0$ are equivalences.
\end{proof}

%

\section{The root stack of a line bundle}
 \label{sc:root1}

For a line bundle $\scL$ on a Deligne-Mumford stack $\scX$
and a positive integer $r$,
the $r$-the root of $\scL$
is the stack $\pi : \sqrt[r]{\scL/\scX} \to \scX$ over $\scX$
such that
\begin{itemize}
 \item
an object
over a scheme $T$
is a triple $(\varphi, \scM, \phi)$
consisting of a morphism
$\varphi : T \to \scX$ of stacks,
a line bundle $\scM$ on $T$, and
an isomorphism
$
 \phi : \scM^{\otimes r} \simto \varphi^* \scL
$
of line bundles on $T$, and
 \item
a morphism is a commutative diagram
$$
\begin{psmatrix}[colsep=1,rowsep=1]
 T & & T' \\
  & \scX
\end{psmatrix}
\psset{arrows=->,shortput=nab,nodesep=3pt}
\ncline{1,1}{1,3}^{\varphi''}
\ncline{1,1}{2,2}_{\varphi}
\ncline{1,3}{2,2}^{\varphi'}
$$
and an isomorphism
$
 \phi'' : \scM^{\otimes r} \simto {\varphi''}^* {\scM'}^{\otimes r}
$
making the diagram\\[3mm]
$$
\begin{psmatrix}[colsep=1,rowsep=1]
 \scM^{\otimes r} & & {\varphi''}^{*} {\scM'}^{\otimes r} \\
  & \rnode{A}{\varphi^* \scL} \cong
  \rnode{B}{(\varphi' \circ \varphi'')^* \scL}
\end{psmatrix}
\psset{arrows=->,shortput=nab,nodesep=3pt}
\ncline{1,1}{1,3}^{\phi''}
\ncline{1,1}{A}_{\phi}
\ncline{1,3}{B}^[npos=.3]{{\varphi''}^*(\phi')}
$$
commute.
\end{itemize}

Let $(\scM, \Phi)$ be the universal object
on $\sqrt[r]{\scL/\scX}$,
so that $\scM$ is a line bundle on $\sqrt[r]{\scL/\scD}$
and $\Phi : \scM^{\otimes r} \to \pi^*{\scL}$ is an isomorphism
of line bundles.

The structure morphism
$
 \pi : \sqrt[r]{\scL/\scX} \to \scX
$
makes the root stack $\sqrt[r]{\scL/\scX}$
into an essentially trivial gerb
over $\scX$ banded by $\mu_r$,
where $\mu_r$ is the kernel
of the $r$-th power map
$\bGm \to \bGm$
between the multiplicative groups.
This means that $\sqrt[r]{\scL/\scX}$
is the $[pt/\mu_r]$-bundle
associated with the principal $\bGm$-bundle
$L := \scL \setminus (\text{the zero section})$.

Now we prove Theorem \ref{th:root1}:

\begin{proof}[Proof of Theorem \ref{th:root1}]
For any coherent sheaf $\scF$ on $\sqrt[r]{\scL/\scX}$
and any integer $i$,
one has the adjunction morphism
$
 \pi^* \pi_* (\scF \otimes \scM^{\otimes i})
  \to \scF \otimes \scM^{\otimes i},
$
whose direct sum gives the morphism
\begin{align} \label{eq:adjunction1}
 \bigoplus_{i=0}^{r-1}
  \pi^*(\pi_* (\scF \otimes \scM^{\otimes(-i)}))
   \otimes \scM^{\otimes i}
 \to \scF.
\end{align}
Since this is a morphism of sheaves,
one can work locally
to show that it is an isomorphism.
Take an open set $\scU \subset \scX$
where the line bundle $\scL$ is trivial,
so that the root stack is the trivial gerb
given by the direct product
$\scU \times [pt / \mu_r]$
with the classifying stack.
Then the sheaf $\scM^{\otimes i}|_{\scU}$
corresponds to $\scO_\scU \otimes \rho_i$
under the equivalence
$
 \coh (\scU \times [pt / \mu_r])
  \cong (\coh \scU) \otimes (\rep \mu_r),
$
where $\rep \mu_r$ is the category of finite-dimensional
representations of $\mu_r$
and $\rho_i$ is the representation
sending $\alpha \in \mu_r$ to $\alpha^i \in \bGm$.
This immediately shows
that \eqref{eq:adjunction1} is an isomorphism.
The same local consideration also shows
that $(\pi^* \coh \scX) \otimes \scM^{\otimes i}$
for $i=0, \dots, r-1$ are mutually orthogonal,
and Theorem \ref{th:root1} is proved.
\end{proof}

\section{The root stack of a line bundle with a section}
\label{sc:root2}

Let $(\scL, \sigma)$ be a pair
of a line bundle $\scL \to \scX$
and a section
$
 \sigma : \scX \to \scL.
$
The stack $\sqrt[r]{(\scL, \sigma)/\scX}$
of the $r$-th roots of $(\scL, \sigma)$ is the stack
such that
\begin{itemize}
 \item
an object over $T$
is a quadruple $(\varphi, \scM, \phi, \tau)$
consisting of an object $(\varphi, \scM, \phi)$
of $\sqrt[r]{\scL/\scX}$ over $T$ and
a section $\tau$ of $\scM$ such that $\phi(\tau^{\otimes r}) = \varphi^*\sigma$,
and
 \item
a morphism is a morphism $(\varphi'', \phi'')$
of $\sqrt[r]{\scL/\scX}$ such that $\phi''(\tau) = \tau'$.
\end{itemize}

Assume that $\scX$ is a smooth Deligne-Mumford stack and
$
 \jbar : \scD \to \scX
$
is a closed embedding of a smooth divisor.
The canonical section of the line bundle $\scO(\scD)$
associated with the divisor $\scD$ will be denoted by
$1 \in \Gamma(\scO(\scD))$.
Let
\begin{align} \label{eq:embedding1}
 \left. \sqrt[r]{(\scO(\scD), 1)/\scX} \right|_\scD
  \subset
 \sqrt[r]{(\scO(\scD), 1)/\scX}
\end{align}
be the substack
consisting of objects $(\varphi, \scM, \phi)$
such that the morphism $\varphi : T \to \scX$
factors through $\jbar$.
There is a closed embedding
$$
 \sqrt[r]{\scO_{\scD}(\scD)/\scD}
  \hookrightarrow
 \left. \sqrt[r]{(\scO(\scD), 1)/\scX} \right|_\scD
$$
sending an $r$-th root $\scM$ of $\scO_\scD(\scD)$
to the same $\scM$ together with the zero section.
The composition of this morphism
with the embedding \eqref{eq:embedding1}
will be denoted by $j$,
which fits into the commutative diagram
$$
\begin{CD}
 \sqrt[r]{\scO_{\scD}(\scD)/\scD}
  @>{j}>> \sqrt[r]{(\scO(\scD), 1)/\scX} \\
  @V{\pi_\scD}VV @VV{\pi_\scX}V \\
 \scD @>{\jbar}>> \scX.
\end{CD}
$$
The universal line bundle on $\sqrt[r]{(\scO(\scD), 1)/\scX}$
will be denoted by $\scM$.

The following proposition gives
Theorem \ref{th:root2}:

\begin{proposition} \label{pr:root2}
\ 
\begin{itemize}
\item[{\rm (i)}]
The functor
$
 j_*\pi_{\scD}^* :
  D^b(\coh \scD) \to D^b(\coh \sqrt[r]{(\scO(\scD), 1)/\scX})
$
is fully faithful if $r>1$.
\item[{\rm (ii)}]
One has a semiorthogonal decomposition
\begin{multline*}
 D^b \coh \sqrt[r]{(\scO(\scD), 1)/\scX} =
 \langle
   j_*\pi_{\scD}^* D^b(\coh \scD) \otimes \scM^{\otimes r-1},
    \ldots, \\
   j_*\pi_{\scD}^* D^b(\coh \scD) \otimes \scM,
  \pi_{\scX}^* D^b \coh \scX
  \rangle.
\end{multline*}
\end{itemize}
\end{proposition}

\begin{proof}
(i)
For any objects $\alpha$ and $\beta$ of $D^b(\coh \scD)$
and any $q \in \bZ$,
we show that the natural morphism
\begin{equation} \label{eqn:map}
 \Hom^q(\alpha, \beta)
  \to \Hom^q(j_*\pi_{\scD}^* \alpha, j_*\pi_{\scD}^*\beta)
  \cong \Hom^q(j^*j_*\pi_{\scD}^* \alpha, \pi_{\scD}^*\beta)
\end{equation}
is an isomorphism.
We may assume that $\alpha$ and $\beta$ are sheaves.
Then we have
\begin{equation} \label{eqn:j*j*}
 H^i(j^*j_*\pi_{\scD}^* \alpha)
  \cong
   \begin{cases}
 \pi_{\scD}^* \alpha & i = 0, \\
 \pi_{\scD}^* \alpha \otimes \scM^{-1} & i = -1, \\
 0 & \text{otherwise}.
   \end{cases}
\end{equation}
If $r>1$, Theorem \ref{th:root1} shows
$$
 \Hom^q(\pi_{\scD}^* \alpha \otimes \scM^{-1}, \pi_{\scD}^*\beta)
  \cong 0
$$
and
$$
 \Hom^q(\pi_{\scD}^* \alpha, \pi_{\scD}^*\beta)
 \cong \Hom^q(\alpha, \beta)
$$
for any $q$.
It follows that the spectral sequence
$$
 \Hom^p(H^{-q}(j^* j_* \pi_\scD^* \alpha), \pi_\scD^* \beta)
  \Rightarrow
 \Hom^{p+q}(j^* j_* \pi_\scD^* \alpha, \pi_\scD^* \beta)
$$
degenerates
and \eqref{eqn:map} is an isomorphism.

(ii)
The subcategory
$\pi_{\scX}^*(D^b(\coh \scX))$ is admissible
since the functor $\pi_{\scX}^*$ has both right and left adjoints.
The subcategories
$
 j_*\pi_{\scD}^* D^b(\coh \scD) \otimes \scM^{\otimes i}
$
are also admissible
since the functor $j_*\pi_{\scD}^*$ has
both left and right adjoints
and the functor $(-) \otimes \scM^{\otimes i}$
is an equivalence.

We can deduce that
$
 j_*\pi_{\scD}^* D^b(\coh \scD) \otimes \scM^{\otimes i}
$
are right orthogonal to $\pi_{\scX}^*(D^b(\coh X))$
for $1 \le i \le r-1$ from
\begin{align*}
 \Hom(\pi_{\scX}^* \alpha,
       j_*(\pi_{\scD}^* \beta \otimes \scM^{\otimes i})) 
  &\cong \Hom(j^*\pi_{\scX}^*\alpha,
               \pi_{\scD}^* \beta \otimes \scM^{\otimes i}) \\
  &\cong \Hom(\pi_{\scD}^* {\jbar}^* \alpha,
               \pi_{\scD}^* \beta \otimes \scM^{\otimes i}) \\
  &=0,
\end{align*}
where $\jbar : \scD \to \scX$ is the closed immersion.
Similarly, \eqref{eqn:j*j*} implies
$$
 \Hom(j_*\pi_{\scD}^* \alpha \otimes \scM^{\otimes k},
       j_*\pi_{\scD}^* \beta \otimes \scM^{\otimes l})
 = 0
$$
for $1 \le k < l \le r-1$.

It remains to show that
any object $\scE$ of $D^b \coh \sqrt[r]{(\scO(D), 1)/\scX}$
is obtained from objects of
$
 j_*\pi_{\scD}^* (D^b \coh \scD) \otimes \scM^{\otimes i}
$
for $1 \le i \le r-1$ and $\pi_{\scX}^* D^b \coh \scX$
by taking shifts and cones.
Since $\pi_{\scX}$ is an isomorphism outside $\scD$,
the mapping cone
$\Cone(\pi_{\scX}^*{\pi_{\scX}}_* \scE \to \scE)$
of the adjunction morphism is supported
on $\sqrt[r]{\scO_{\scD}(\scD)/\scD}$.
It follows that $\scE$ can be obtained from
$\pi_{\scX}^*{\pi_{\scX}}_* \scE$ and
an object supported on $\sqrt[r]{\scO_{\scD}(\scD)/\scD}$
by taking cones.

An object supported on $\sqrt[r]{\scO_{\scD}(\scD)/\scD}$
is obtained from objects of
$j_* D^b \coh \sqrt[r]{\scO_{\scD}(\scD)/\scD}$
by taking cones,
which in turn can be obtained from objects of 
$
 j_*\pi_{\scD}^* D^b(\coh \scD) \otimes \scM^{\otimes i}
$
for $0 \le i \le r-1$ by Theorem \ref{th:root1}.
Finally, we have to show that an object of 
$ j_*\pi_{\scD}^* D^b(\coh \scD)$
 is obtained from objects of $\pi_{\scX}^* D^b \coh \scX$ and
$
 j_*\pi_{\scD}^* D^b(\coh \scD) \otimes \scM^{\otimes i}
$
for $1 \le i \le r-1$.
If $\alpha$ is a sheaf in $D^b(\coh \scD)$, then
$\pi_{\scX}^*{\bar j}_* \alpha$ has a filtration whose factors
are $j_*\pi_{\scD}^* \alpha \otimes \scM^{\otimes i}$ for $0 \le i \le r-1$.
Thus $j_*\pi_{\scD}^* \alpha$ is obtained from $\pi_{\scX}^*{\bar j}_* \alpha$
and $j_*\pi_{\scD}^* \alpha \otimes \scM^{\otimes i}$ for $1 \le i \le r-1$
by taking shifts and cones.
This concludes the proof of Proposition \ref{pr:root2}.
\end{proof}

\begin{corollary} \label{cor:root}
If both $\scX$ and $\scD$ have full exceptional collections,
then so does the root stack
$\sqrt[r]{(\scO(\scD), 1)/\scX}$.
\end{corollary}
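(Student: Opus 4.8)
The plan is to read Corollary \ref{cor:root} off directly from the semiorthogonal decomposition in Lemma \ref{lemma:root}(ii), invoking the general principle that a triangulated category which admits a semiorthogonal decomposition into finitely many components, each carrying a full exceptional collection, itself carries a full exceptional collection, obtained by concatenating the individual collections in the order dictated by the decomposition.

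First I would observe that every semiorthogonal component appearing in Lemma \ref{lemma:root}(ii) is equivalent, as a triangulated category, either to $D^b \coh \scX$ or to $D^b \coh \scD$. For the component $\pi_{\scX}^* D^b \coh \scX$ this is because $\pi_{\scX}^*$ is fully faithful (as used in the proof of the lemma, $\pi_{\scX *} \scO = \scO_{\scX}$, so $\pi_{\scX *} \pi_{\scX}^* \cong \id$). For the components $j_* \pi_{\scD}^* D^b(\coh \scD) \otimes \scM^{\otimes i}$ with $1 \le i \le r-1$, full faithfulness of $j_* \pi_{\scD}^*$ is exactly Lemma \ref{lemma:root}(i), and tensoring by the line bundle $\scM^{\otimes i}$ is an autoequivalence, so the composite is fully faithful as well.

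Next, fix full exceptional collections $(F_1, \dots, F_m)$ on $D^b \coh \scD$ and $(G_1, \dots, G_k)$ on $D^b \coh \scX$. Applying the fully faithful functors above, the images form exceptional collections inside the corresponding components, and each image generates its component as a thick subcategory because the functor is exact and fully faithful and the original collection is full. It then remains to assemble the tuple consisting of the image of $(F_\bullet)$ under $j_* \pi_{\scD}^*(-) \otimes \scM^{\otimes r-1}$, then its image under $\otimes \scM^{\otimes r-2}$, and so on down to $\otimes \scM$, followed by the image of $(G_\bullet)$ under $\pi_{\scX}^*$. Semiorthogonality between objects from distinct components is precisely the semiorthogonality recorded in Lemma \ref{lemma:root}(ii); semiorthogonality within a single component is inherited from $(F_\bullet)$ or $(G_\bullet)$; and fullness of the concatenated collection follows since the components jointly generate $D^b \coh \sqrt[r]{(\scO(\scD),1)/\scX}$.

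There is no serious obstacle here; the one point deserving attention is that \emph{exceptional} is meant in the strong sense $\Hom^{\bullet}(E,E) = \bC$ in degree $0$, so one must know that the $\Hom$-spaces computed in $D^b \coh \sqrt[r]{(\scO(\scD),1)/\scX}$ between $j_* \pi_{\scD}^* F_a \otimes \scM^{\otimes i}$ and $j_* \pi_{\scD}^* F_b \otimes \scM^{\otimes i}$ coincide with $\Hom^{\bullet}(F_a, F_b)$ on $\scD$. This is exactly what full faithfulness provides, so no additional argument is needed beyond Lemma \ref{lemma:root}; everything else is the standard concatenation lemma for semiorthogonal decompositions.
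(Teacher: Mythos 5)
Your proposal is correct and is exactly the argument the paper intends: the corollary is stated without proof as an immediate consequence of Lemma \ref{lemma:root}(ii), namely that each semiorthogonal component is the image of $D^b\coh\scD$ or $D^b\coh\scX$ under a fully faithful functor, so concatenating the images of full exceptional collections yields a full exceptional collection on the root stack. Your added remark about $\Hom$-spaces being preserved is the right (and only) point to check, and it is indeed supplied by full faithfulness.
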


\section{Iterations of root constructions}
 \label{sc:iteration}

A smooth Deligne-Mumford stack
$\scY$ is said to be {\em canonical}
if the locus where
the structure morphism $\scY \to Y$
to the coarse moduli space $Y$
is not an isomorphism
has codimension greater than one
\cite[Definition 4.4]{Fantechi-Mann-Nironi}.
The canonical stack has the universal property
\cite[Theorem 4.6]{Fantechi-Mann-Nironi}
that any dominant codimension-preserving
morphism $f : \scX \to Y$
from a smooth stack $\scX$ without generic stabilizers
factors through $\scY \to Y$ uniquely
up to unique 2-arrow;\\[0mm]
$$
\begin{psmatrix}[colsep=1.5,rowsep=1]
 \scX & \scY \\
  & Y.
\end{psmatrix}
\psset{arrows=->,nodesep=3pt,shortput=nab}
\ncline[linestyle=dashed]{1,1}{1,2}^{\exists ! g}
\ncline{1,2}{2,2}^{\epsilon}
\ncline{1,1}{2,2}_{f}
$$
For a variety $X$ with at worst quotient singularities,
there is a canonical stack $\scX^\can$
whose coarse moduli space is isomorphic to $X$,
which is determined uniquely
up to isomorphism
\cite[Remark 4.9]{Fantechi-Mann-Nironi}.
%
For effective divisors $\scD_1, \ldots, \scD_s$ on $\scX^{\can}$
and positive integers $r_1, \dots, r_s$,
the fiber product
\begin{align} \label{eq:iteration3}
 \scX =
  \sqrt[r_1]{(\scO(\scD_1),1)/\scX^{\can}}
   \times_{\scX^{\can}} \dots \times_{\scX^{\can}}
    \sqrt[r_s]{(\scO(\scD_s),1)/\scX^{\can}}
  \xto{\quad \varphi \quad} \scX^\can
\end{align}
is obtained by iterating root constructions
\cite[Remark 2.2.5]{Cadman_US}.
If we write the reduced closed substack
$(\varphi^{-1}(\scD_i))_{\mathrm{red}}$
as $\scDtilde_i$,
then one has $\varphi^* \scD_i = r_i \scDtilde_i$.
The numbers $(r_1, \ldots, r_s)$ are called
the {\em divisor multiplicities} of $\scX$
\cite[Remark 3.7]{Fantechi-Mann-Nironi}.
If each $\scD_i$ is smooth and
$\sum_i \scD_i$ is a simple normal crossing divisor,
then $\scX$ and $\scDtilde_i$ are smooth
and $\sum_i \scDtilde_i$
is a simple normal crossing divisor
\cite[Section 1.3.b]{Fantechi-Mann-Nironi}.

The following lemma can be proved
in just the same way as (2) of
\cite[Theorem 5.2]{Fantechi-Mann-Nironi}.
We give a proof for the reader's convinience.

\begin{proposition}[{cf.~\cite[Theorem 5.2]{Fantechi-Mann-Nironi}}]
 \label{pr:iteration}
Let $X$ be a variety over $\bC$ with at worst quotient singularities,
$\scX^\can$ be its canonical stack,
and $\sum_i \scD_i$ be an effective
simple normal crossing divisor on $\scX^\can$
such that each irreducible component $\scD_i$ is smooth.
Let further
$(r_1, \ldots, r_s)$ be a sequence of positive integers
and $\scX$ be the smooth Deligne-Mumford stack
obtained by iterated root constructions
as in \eqref{eq:iteration3}.
Then $\scX$ is characterized
by the following properties
up to isomorphism:
\begin{itemize}
 \item
$\scX$ is a smooth separated Deligne-Mumford stack
without generic stabilizers.
 \item
$\scX$ has the same coarse moduli space
as $\scX^{\can}$.
 \item
The canonical morphism
$
 \varphi : \scX \to \scX^\can
$
is an isomorphism outside $\bigcup_i \scD_i$.
 \item
The pull-back of $\scD_i$ is $r_i$ times a prime divisor.
\end{itemize}
\end{proposition}
\begin{proof}

Let $\varphi:\scX \to \scX^\can$ be the stack obtained as in \eqref{eq:iteration3}.
Then it follows from \cite[Section 2.1]{Bayer-Cadman} that $\scX$ has the four properties
in the statement.

Conversely, 
suppose $f:\scX' \to \scX^\can$ is an arbitrary smooth Deligne-Mumford stack with the above properties.
Then, by the universal property of the root stack, one has a morphism
$g: \scX' \to \scX$ with $\varphi\circ g=f$.
Since $\scX'$ has the same coarse moduli space as $\scX^{\can}$,
$g$ is a surjective morphism.
Let $S \to \scX$ be an \'etale atlas and let
$\scY$ be the fiber product of $S$ and $\scX'$ over $\scX$
with the induced morphism $\gtilde: \scY \to S$:
\begin{align}
\begin{psmatrix}[colsep=15mm,rowsep=5mm]
 \cY & \cX' \\
 & & \cX^\can \\
 S & \cX
\end{psmatrix}
\psset{arrows=->,shortput=nab,nodesep=5pt}
\ncline{1,1}{1,2}
\ncline{1,2}{2,3}^{f}
\ncline{3,1}{3,2}
\ncline{3,2}{2,3}_{\varphi}
\ncline{1,1}{3,1}_{\gtilde}
\ncline{1,2}{3,2}^{g}
\end{align}

\begin{step}
$\gtilde$ is \'etale.
\end{step}
Let $U \to \scY$ be an \'etale atlas.
The morphism $U \to S$ is flat
since $U$ and $S$ are smooth and $U \to S$ has $0$-dimensional fibers.
It is also \'etale over $\scX^\can \setminus \bigcup_i \scD_i$ by our assumption.
Moreover, the pull-back of a prime divisor of $S$ to $U$ is a reduced divisor
and thus $U \to S$ is unramified in codimension one.
Therefore $U \to S$ is \'etale in codimension one.
Then it must be \'etale by the purity of the branch locus.
This implies that $\gtilde$ is \'etale.

\begin{step}
$\scY$ is an algebraic space.
\end{step}

The \'etale morphism
$\gtilde :\scY \to S$ factors through the coarse moduli space $Y$ of $\scY$ 
by our assumption that $S$ is a scheme.
This implies that the the morphism from $\scY$ to $Y$ is unramified.
Take an \'etale covering $\{Y_{\alpha} \to Y\}$ of $Y$
such that $\scY \times_Y Y_\alpha$ is isomorphic to the quotient stack
$[U_\alpha/\Gamma_\alpha]$,
where $U_\alpha$ is a scheme for each $\alpha$
and $\Gamma_\alpha$ is a finite group acting on $U_\alpha$
\cite[Lemma 2.2.3]{MR1862797}.
By the unramifiedness of the morphism $\scY \to Y$,
the quotient morphism $U_\alpha \to U_\alpha/\Gamma_\alpha$ is also unramified
and hence the action of $\Gamma_\alpha$ on $U_\alpha$ 
is free.
(Suppose a subgroup $H \subset \Gamma_\alpha$ fixes a closed point $P$ of $U_\alpha$.
Then the quotient morphism $h:U_\alpha \to U_\alpha/H$ is unramified
and therefore $H$ acts trivially on the tangent space of $P$.
This implies that $H$ acts trivially on a neighbourhood of $P$.
Since $\scY$ doesn't have a generic stabilizer, $H$ must be trivial.)
This implies that  $\scY=Y$ is an algebraic space.

\begin{step}
$\gtilde$ is an isomorphism.
\end{step}
Since $\gtilde$ is an \'etale surjective separeted morphism of algebraic spaces,
the diagonal morphism $\Delta_{\gtilde}$ of $\gtilde$ is an open and closed immersion.
Moreover, $\gtilde$ is an isomorphism over an open dense subset of $S$
and therefore $\Delta_{\gtilde}$ is actually an isomorphism.
This means that $\gtilde$ becomes an isomorphism if we take a base change by $\gtilde$ itself.
Since $\gtilde$ is \'etale surjective,  $\gtilde$ is actually an isomoprhism.
This proves that $g$ is an isomorphism.
\end{proof}

The following is the main result in this section:

\begin{proposition} \label{pr:root}
Let $\scX$ be a two-dimensional
smooth separated Deligne-Mumford stack
without generic stabilizers.
Assume that
\begin{itemize}
 \item
the canonical morphism
$
 \varphi : \scX \to \scX^\can
$
to the canonical stack $\scX^\can$
of the coarse moduli space $X$
is an isomorphism
outside a simple normal crossing divisor
$\sum_i \scD_i$ on $\scX^\can$,
 \item
the pull-back $\varphi^* \scD_i$ is a multiple
of a prime divisor, and
 \item
each irreducible component $\scD_i$
is a smooth rational stack.
\end{itemize}
Then there exists an exceptional collection
$(E_1, \dots, E_{\ell})$
and a semiorthogonal decomposition
\begin{equation} \label{eq:div}
 D^b \coh \scX =
  \langle
   E_1, \dots, E_{\ell}, \varphi^* D^b \coh \scX^{\can}
  \rangle.
\end{equation}
\end{proposition}
\begin{proof}
%
Put
$$
 \scX_1 =
  \sqrt[r_2]{(\scO(\scD_2), 1)/\scX^{\can}}
   \times_{\scX^{\can}} \dots \times_{\scX^{\can}}
    \sqrt[r_s]{(\scO(\scD_s),1)/\scX^{\can}}
$$
and let
$
 \scD \subset \scX_1
$
be the prime divisor corresponding to $D_1$.
Then $\scX$ is isomorphic to
$\sqrt[r_1]{(\scO(\scD), 1)/\scX_1}$
and one has a semiorthogonal decomposition
\begin{align*}
 D^b \coh \scX =
 \langle
   j_*\pi_{\scD}^* D^b(\coh \scD) \otimes \scM^{\otimes r_1-1},
    \ldots, 
   j_*\pi_{\scD}^* D^b(\coh \scD) \otimes \scM,
  \varphi^* D^b \coh \scX_1
  \rangle
\end{align*}
by Proposition \ref{pr:root2}.
Since $D_1$ is a smooth rational curve
and $\sum_i \scD_i$ is a simple normal crossing divisor,
the divisor $\scD$ is smooth
and its coarse moduli space is a smooth rational curve.
It follows that $\scD$ is isomorphic
to a weighted projective line in the sense
of Geigle and Lenzing \cite{Geigle-Lenzing_WPC},
so that the derived category $D^b \coh \scD$
and hence the right orthogonal
to $\varphi^* D^b \coh \scX_1$
in $D^b \coh \scX$ has
a full exceptional collection.
Now the assertion follows from induction on $s$.
\end{proof}

We end this section with the following lemma:

\begin{lemma} \label{lm:root1}
The stack $[Y_0/A]$ satisfies the assumption
of Proposition \ref{pr:root}.
\end{lemma}

\begin{proof}
$Y_0$ is isomorphic to the minimal resolution
of $\bA^2 / G_0$,
and let $E = \bigcup_i D_i$ be the exceptional divisor.
The canonical morphism is clearly an isomorphism outside $E$,
which is a normal crossing divisor.
Since $A$ is a cyclic group
whose action on $Y_0$ has no generic stabilizers,
and $E$ is a simple normal crossing divisor,
the locus of $[Y_0/A]$
where the canonical morphism $[Y_0/A] \to [Y_0/A]^\can$
is not an isomorphism
consists of disjoint union of
irreducible exceptional divisors.
Each of these irreducible divisors are smooth rational curves,
and Lemma \ref{lm:root1} is proved.
\end{proof}

\section{Cyclic case of Theorem \ref{th:global}
}
 \label{sc:canonical}

Let $X$ be a surface with at worst quotient singularities and
consider the diagram
$$
\begin{CD}
 \scZ @>{q}>> \scX \\
 @V{p}VV @VV{\pi}V \\
 Y @>{\tau}>> X
\end{CD}
$$
where $\scX$ is the canonical stack associated with $X$,
$\tau : Y \to X$ is the minimal resolution, and
$\scZ$ is the reduced part of the fiber product $Y \times_{X} \scX$.
We consider the integral functor
$$
 \Phi:=q_* \circ p^*: D^b(\coh Y) \to D^b(\coh \scX),
$$
whose right adjoint will be denoted by $\Psi$.

\begin{proposition} \label{pr:orbifold}
Assume $X$ has only cyclic quotient singularities.
Then $\Phi$ is fully faithful and
there is a semiorthogonal decomposition
$$
 D^b \coh \scX =
  \langle
   E_1, \dots, E_{\ell},  \Phi (D^b \coh Y)
  \rangle
$$
where $E_1, \dots, E_{\ell}$ is an exceptional collection.
\end{proposition}

\begin{proof}
If $X$ is the global quotient
$\bA^2 / G$ for a finite small subgroup $G$ of $\GL_2(\bC)$,
the proof of \cite[Theorem 3.1]{Ishii_MKG} shows that
$\scZ$ is the quotient stack of the universal subscheme
in $Y \times \bA^2$ by the action of $G$
under the identification of $Y$ with $\GHilb(\bA^2)$.
Hence the assertion in this case follows
from Theorem \ref{th:cyclic}.

In the general case,
the composition $\Psi \circ \Phi$ is an integral functor
with respect to some kernel object $\scP$ on $Y \times Y$.
By the local case above,
$\scP$ is etale locally the structure sheaf of the diagonal.
Hence the kernel of $\Psi \circ \Phi$ is
a line bundle on the diagonal,
which implies that $\Phi$ is fully faithful.
Since the singularities of $X$ are isolated,
the semiorthogonal decomposition comes
from local contributions
around each singular points,
where the assertion holds
by the global quotient case above.
\end{proof}

This yields Step 4 in Introduction.

\section{Proofs of Theorem \ref{th:main} and Theorem \ref{th:global}}
 \label{sc:proof}

To prove Theorem \ref{th:main},
it remains to show the compatibility
between the functor
$$
 \Phi' : D^b \coh Y \to D^b \coh [\bA^2/G]
$$
and the composition
\begin{align*}
 D^b \coh Y
  \xto{\ \Phi_3\ } D^b \coh Y_2
  \xto{\ \Phi_2\ } D^b \coh \scY_1
  \xto{\ \Phi_1\ } D^b \coh [Y_0/A]
  \xto{\ \Phi_0\ } D^b \coh [\bA^2/G].
\end{align*}
It suffices to show that the functor
$\Phi_0 \circ \Phi_1 \circ \Phi_2$ is isomorphic
to the functor defined by the universal family
parameterized by the moduli space $Y_2=A\operatorname{-Hilb}(G_0\operatorname{-Hilb}(\bA^2))$.
Since $\Phi_2$ is the integral functor defined by the kernel object $\scO_{(Y_2 \times_{Y_1} \scY_1)_\text{red}}$ and $\Phi_1$ is the pull-back functor,
$\Phi_1 \circ \Phi_2$ is an integral functor whose kernel object is the pull-back of $\scO_{(Y_2 \times_{Y_1} \scY_1)_\text{red}}$ by the flat morphism $Y_2 \times [Y_0/A] \to Y_2 \times \scY_1$.
This object is isomorphic to $\scO_{(Y_2 \times [Y_0/A])_{\text{red}}}$ and therefore
$\Phi_1 \circ \Phi_2$ is isomorphic to the functor defined by the universal family of
$Y_2=\AHilb(Y_0)$.
Then by \cite[Lemma 2.2]{MR3049308},
$\Phi_0 \circ \Phi_1 \circ \Phi_2$ is isomorphic
to the functor defined by the universal family
parameterized by the moduli space $Y_2=\AHilb(\GoHilb(\bA^2))$.

To prove Theorem \ref{th:global},
we want to replace Theorem \ref{th:cyclic}
with Theorem \ref{th:main}
in the proof of Proposition \ref{pr:orbifold}.
In order to do that, consider the resolution $\tau_2: Y_2 \to X$
obtained by successively blowing up $Y$ so that
it is isomorphic to $\AHilb(\GoHilb(\bA^2))$
over an etale neighbourhood of a singular point of $X$
(whose corresponding point in $\scX$ has stabilizer group $G$).
Let $\widetilde{\scZ}$ be the reduced part of the fiber product
$Y_2 \times_X \scX$.
By the lemma below, for each singular point $P$ of $X$
whose neighbourhood is the quotient by a group $G_P \subset \GL(2, \bC)$,
there is a sheaf $\scF_P$ on $Y_2 \times \scX$
supported on $\tau^{-1}(P) \times BG_P$
with an extension
$$
0 \to \scO_{\widetilde{\scZ}} \to \scE \to \bigoplus_P \scF_P \to 0
$$
such that $\scE$ is isomorphic to the
universal family parameterized by $\AHilb(\GoHilb(\bA^2))$
for $G=G_P$ in an etale neighbourhood of each $P$.
If we define $\Phi$ as the integral functor whose kernel object is $\scE$,
then we can argue as in the proof of Proposition \ref{pr:orbifold}
to obtain Theorem \ref{th:global}.

\begin{lemma}
Consider the local situation $X=\bA^2/G$ with $Y=\GHilb(\bA^2)$
and $Y_2=\AHilb(\GoHilb(\bA^2))$.
Let $\widetilde{\scZ}$ be the reduced part of the figer product
$Y_2 \times_X \bA^2$ and let $\scE$ be the universal family
parameterized by $Y_2$.
Then there is an exact sequence
$$
0 \to \scO_{\widetilde{\scZ}} \to \scE \to \scF \to 0
$$
where $\scF$ is a $G$-equivariant coherent sheaf on $Y_2 \times \bA^2$
supported on $\tau_2^{-1}(0) \times \{0\}$.
\end{lemma}
\begin{proof}
The universal sheaf $\scE$ is the structure sheaf of the reduced part
of the fiber product $Y_2 \times_{Y_1} Y_0 \times_{\bA^2/G_0} \bA^2$
and there is a morphism
$Y_2 \times_{Y_1} Y_0 \times_{\bA^2/G_0} \bA^2 \to \scZ$
which implies a map $\scO_{\widetilde{\scZ}} \to \scE$.
It is an isomorphism over the smooth locus of $X$ and
since $\scO_{\widetilde{\scZ}}$ is torsion-free as a
coherent sheaf of $\scO_{Y_2}$-modules, this map is injective.
\end{proof}

\section{Invertible polynomials}
 \label{sc:invertible}

Let $n$ be a positive integer.
An integer $n \times n$-matrix
$A = (a_{ij})_{i, j = 1}^n$ with non-zero determinant
gives a polynomial $W \in \bC[x_1, \dots, x_n]$ by
$$
 W = \sum_{i = 1}^n x_1^{a_{i 1}} \cdots x_n^{a_{in}}.
$$
Non-zero coefficients of $W$
can be absorbed by rescaling $x_i$.
A polynomial obtained in this way
is called an {\em invertible polynomial}
if it has an isolated critical point at the origin.
Invertible polynomials play essential role
in transposition mirror symmetry
of Berglund and H\"{u}bsch
\cite{Berglund-Hubsch},
which has attracted much attention recently
(cf. e.g. \cite{Borisov_BHMSVA, Chiodo-Ruan_LGCYSSI, Krawitz,Takahashi_WPL}
and references therein).
The quotient ring
$R = \bC[x_1, \dots, x_n] / (W)$ is naturally graded
by the abelian group $L$
generated by $n + 1$ elements $\vecx_i$ and $\vecc$ with relations
$$
 a_{i 1} \vecx_1 + \dots + a_{i n} \vecx_n = \vecc,
  \qquad i = 1, \dots, n.
$$
The abelian group $L$ is the group of characters of $K$
defined by
$$
 K =
 \{ (\alpha_1, \dots, \alpha_n) \in (\bCx)^n
       \mid \alpha_1^{a_{11}} \cdots \alpha_n^{a_{1n}}
              = \dots = \alpha_1^{a_{n1}} \cdots \alpha_n^{a_{nn}}
 \}.
$$
The group $\Gmax$ of {\em maximal diagonal symmetries}
is defined as the kernel of the map
$$
\begin{array}{ccc}
 K & \to & \bCx \\
 \vin & & \vin \\
 (\alpha_1, \dots, \alpha_n) & \mapsto &
 \alpha_1^{a_{11}} \cdots \alpha_n^{a_{1n}},
\end{array}
$$
so that
there is an exact sequence
$$
 1 \to \Gmax \to K \to \bCx \to 1
$$
of abelian groups.
Let
$$
 \scX = [(W^{-1}(0) \setminus \{ 0 \}) / K]
$$
be the quotient stack of $W^{-1}(0) \setminus \{ 0 \}$
by the natural action of $K$.
It is a smooth Deligne-Mumford stack
since $W$ has an isolated critical point at the origin
and the action of $K$ at any point
in $W^{-1}(0) \setminus \{ 0 \}$
has a finite isotropy group.

\begin{lemma} \label{lem:rational}
The coarse moduli space of $\scX$ is a rational variety.
Moreover,
each codimension one irreducible component
of the locus where $\scX$ has non-trivial stabilizers
is also rational
and these components form a simple normal crossing divisor.
\end{lemma}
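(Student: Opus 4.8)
The plan is to exploit the monomial substitution given by the exponent matrix. After absorbing the coefficients of $W$, let $\mu\colon\bC^n\to\bC^n$ be the morphism $\mu(x)=\bigl(\prod_j x_j^{a_{1j}},\dots,\prod_j x_j^{a_{nj}}\bigr)$ and put $H=\{t_1+\dots+t_n=0\}\subset\bC^n$, so that $W^{-1}(0)=\mu^{-1}(H)$. By the definition of $K$ all $n$ monomials agree on $K$, hence $\mu(\alpha\cdot x)=\chi(\alpha)\,\mu(x)$ for $\alpha\in K$, where $\chi\colon K\to\bCx$, $\chi(\alpha)=\prod_j\alpha_j^{a_{1j}}$, acts on the target by scaling; here $\ker\chi=\Gmax$ is finite because $\det A\ne0$, so $\chi$ is surjective. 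Since $\det A\ne0$, $\mu$ restricts to a finite surjective homomorphism $(\bCx)^n\to(\bCx)^n$ with kernel contained in $K$, and composing with the quotient by the diagonal torus $\Delta\cong\bCx$ produces a surjective homomorphism whose kernel is exactly $K$, hence an isomorphism $(\bCx)^n/K\simto(\bCx)^n/\Delta$ that identifies $\bigl(W^{-1}(0)\cap(\bCx)^n\bigr)/K$ with a dense open subset of $\bP(H)\cong\bP^{n-2}$. Unless some variable divides $W$ — which happens only when $n\le2$, where $\scX$ has dimension zero and the lemma is trivial — the torus locus is dense in $W^{-1}(0)\setminus\{0\}$, so this open subset is dense in the coarse moduli space $X$, and $X$ is therefore rational.

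For the stacky locus, I would first record that the stabilizer in $K$ of a point $x\in W^{-1}(0)\setminus\{0\}$ depends only on the vanishing set $I=\{\,i:x_i=0\,\}$ and equals $K_I=\{\alpha\in K:\alpha_i=1\text{ for }i\notin I\}$; thus the locus with non-trivial stabilizers is the union, over the minimal non-empty $I$ with $K_I\ne1$, of the images $\scD_I$ of $\{x_i=0\ \forall i\in I\}\cap W^{-1}(0)\setminus\{0\}$. A dimension count shows $\scD_I$ is a divisor only when $|I|=1$, or when $|I|=2$ and the restriction $W_I$ of $W$ to the corresponding coordinate subspace is identically $0$; moreover a $\scD_I$ with $|I|=1$ may be reducible when $W_I$ has a monomial factor, in which case its extra components are again of the second type. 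To see each prime divisor is rational I would run the argument of the first paragraph on $W_I$: using the decomposition of an invertible polynomial into Fermat, chain and loop blocks, deleting the variables in $I$ breaks the blocks into shorter chains, Fermat vertices, and "chains missing their terminal pure power", and in each case $\{W_I=0\}$ is, over its torus stratum, birational to a coordinate linear subspace of the $\bP^{n-2}$ above — possibly after an intermediate torus fibration, when the relevant exponent submatrix is no longer square. Hence the coarse space of each $\scD_I$ is rational.

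For the simple normal crossing property I would work in an etale chart. Every point of $\scX$ has a neighbourhood presented as the quotient by a finite abelian subgroup of $K$ of a small transversal slice in $W^{-1}(0)\setminus\{0\}$, in which the $\scD_I$ pull back to the trace on $\{W=0\}$ of a union of coordinate hyperplanes. Since $\{W=0\}$ is smooth off the origin, these coordinate hyperplanes meet it, and one another, transversally; and if more than the expected number of them passed through a common point, the restriction of $W$ to the corresponding small coordinate subspace would be a single monomial, whose zero locus is a union of coordinate hyperplanes containing the origin — a point that has been removed. A finite etale quotient preserves transversality, so the $\scD_I$ form a simple normal crossing divisor.

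The rationality of $X$ in the first paragraph is immediate once the substitution is in place. The real work is in the second step: pinning down which coordinate strata are divisorial, and showing each remains rational. The subtlety is that the group acting on a restricted polynomial is a subtorus of the original $K$, not the symmetry group naturally attached to that restriction, and the restriction itself need not be an invertible polynomial, so the clean birational identification of the first paragraph must be replaced by a case analysis over the Fermat/chain/loop blocks; keeping the reducible $\scD_I$ under control is part of the same bookkeeping. This, rather than any conceptual difficulty, is the main obstacle.
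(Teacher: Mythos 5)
Your rationality argument is essentially the paper's own: the monomial substitution identifies $[(\bCx)^n/K]$ with $[(\bCx)^n/\Delta]$ and the open torus stratum with an open subset of a hyperplane section of $\bP^{n-2}$, and the divisorial strata of the stabilizer locus are exactly the two types you list ($|I|=1$, and $|I|=2$ with $\{x_i=x_j=0\}\subset W^{-1}(0)$). Where you propose a Fermat/chain/loop case analysis for the $|I|=1$ components, the paper disposes of them in one line (``contains an affine subspace of a torus''), and the uniform argument does work without any case analysis: the exponent vectors of the monomials surviving on $\{x_i=0\}$ are restrictions of rows of $A$ that vanish in the deleted column, hence remain linearly independent, and the sublattice they span is saturated in the character lattice of $(\bCx)^{I^c}/K'$ precisely because $\det A\neq 0$; so the same monomial map exhibits the quotient as a torus bundle over a genuine hyperplane section of a torus, which also shows these components are irreducible over the open stratum. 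The block-by-block bookkeeping you flag as the main obstacle is therefore avoidable.

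The one genuine gap is in your normal crossing paragraph. The claim ``since $\{W=0\}$ is smooth off the origin, these coordinate hyperplanes meet it, and one another, transversally'' is a non sequitur: a smooth hypersurface can be tangent to a coordinate hyperplane. For the invertible polynomial $W=x_1^2x_2+x_2^2+x_3^2x_4+x_4^2$ one has $dW=(0,x_1^2,0,0)$ along the punctured $x_1$-axis, so $\{x_2=0\}$ meets $W^{-1}(0)$ non-transversally there and $\{x_2=0,\,W=0\}$ is a union of two tangent sheets. This particular hyperplane happens not to occur in the stabilizer locus ($K_{\{2\}}=1$), and that is exactly what rescues the statement: if $K_{\{i\}}\neq 1$ then $W$ is invariant under a non-trivial scaling of $x_i$ alone, which forces every $a_{ki}$ to be $0$ or $\geq 2$, hence $x_i\mid\partial_i W$; only then does smoothness of $W^{-1}(0)$ at a point of $\{x_i=0\}$ give some $\partial_j W\neq 0$ with $j\neq i$ and hence transversality. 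Your sentence about ``more than the expected number'' of hyperplanes addresses a different issue and does not fill this hole; nor does your argument address the ``simple'' part (why no irreducible component self-intersects). The paper's route avoids both problems: near any point $\scX$ is $[\,\text{slice}/\Gamma\,]$ with $\Gamma$ finite abelian, diagonalizing $\Gamma$ turns the stabilizer locus into a union of coordinate subspaces of the slice (normal crossings for free), and distinct local branches carry distinct stabilizer subgroups of $K$, so they lie on distinct global components (simplicity).
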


\begin{proof}
%
Since the $K$-action
on $(\bCx)^n$ is free,
the open dense substack
$$
 \scU = [(W^{-1}(0) \cap (\bCx)^n)/K]
$$
of $\scX$ is a scheme,
which is an affine linear subspace of
$$
 [(\bCx)^n / K] \cong (\bCx)^{n-1}
$$
considered as an open subscheme of $\bC^{n-1}$.
This shows that $\scX$ is rational.
A divisor with a non-trivial generic stabilizer
is the closure of either
$$
 W^{-1}(0) \cap \{ x_i = 0 \}
  \cap \{ x_k \ne 0 \text{ for } k \ne i\}
$$
for some $i$ or
$$
 \{ x_i = x_j = 0 \}
  \cap \{ x_k \ne 0 \text{ for } k \ne i, j \}
$$
for some $i \ne j$.
(If
$\{ x_i = x_j = 0 \}$ is not contained in $W^{-1}(0)$,
then $W^{-1}(0) \cap \{ x_i = x_j = 0 \}$
has codimension greater than one.)
The quotient of the former also contains
an affine subspace of a tours, and
the quotient of the latter is a toric stack.
Hence they are rational.

Since the stabilizer group of any point on $\scX$ are abelian
and therefore locally diagonalizable,
the union of such divisors has normal crossings.
Moreover, at each point on the union,
different local components have
different stabilizer subgroups in $K$.
Hence the union has simple normal crossings.
\end{proof}

Now assume $n = 4$ so that $\dim \scX = 2$ and
let $Y$ be the minimal resolution
of the coarse moduli space of $\scX$.
Since $Y$ is a rational surface,
one has a full exceptional collection on $Y$
by Orlov \cite{Orlov_PB}.
Let $\scX^{\can}$ be the canonical stack
associated with the coarse moduli space of $\scX$.
Then Theorem \ref{th:global} gives a full exceptional collection
on $\scX^{\can}$.
Since $\scX$ can be obtained
by successive root constructions
from $\scX^\can$,
Proposition \ref{pr:root} and Lemma \ref{lem:rational}
give the following:

\begin{theorem} \label{th:invertible}
The two-dimensional Deligne-Mumford stack
associated with an invertible polynomial in four variables
has a full exceptional collection.
\end{theorem}

\bibliographystyle{amsalpha}
\bibliography{bibs}

\noindent
Akira Ishii

Department of Mathematics,
Graduate School of Science,
Hiroshima University,
1-3-1 Kagamiyama,
Higashi-Hiroshima,
739-8526,
Japan

{\em e-mail address}\ : \ akira@math.sci.hiroshima-u.ac.jp

\ \\

\noindent
Kazushi Ueda

Department of Mathematics,
Graduate School of Science,
Osaka University,
Machikaneyama 1-1,
Toyonaka,
Osaka,
560-0043,
Japan.

{\em e-mail address}\ : \  kazushi@math.sci.osaka-u.ac.jp

\end{document}